\documentclass[reqno]{amsart}
\usepackage{CJK}
\usepackage{hyperref}
\usepackage{cite}
\usepackage{amsmath}
\usepackage{mathrsfs}
\usepackage{color}

\makeatletter
\@namedef{subjclassname@2020}{%
	\textup{2020} Mathematics Subject Classification}
\makeatother

\textheight 20.0cm \textwidth 14.0cm
\numberwithin{equation}{section}

\newtheorem{thm}{Theorem}[section]
\newtheorem{prop}[thm]{Proposition}
\newtheorem{lem}[thm]{Lemma}

\newtheorem{example}{Example}[section]
\newtheorem{Def}[thm]{Definition}

\theoremstyle{definition}

\newtheorem{remark}[thm]{Remark}
\allowdisplaybreaks
\newcommand{\R}{{\mathbb{R}}}

\newcommand{\rn}{{\mathbb{R}^N}}
\newcommand{\N}{\mathbb{N}}

\DeclareMathOperator{\DIV}{div}

\title[Entropy solutions to the fully non-local diffusion equations]{Entropy solutions to the fully non-local diffusion equations}
\author{Ying Li,  Chao Zhang$^*$}
\address{Ying Li\newline
School of Mathematics, Harbin Institute of Technology, Harbin 150001, China\newline
\texttt{lymath@hit.edu.cn}}
\address{Chao Zhang\newline
 School of Mathematics and Institute for Advanced Study in Mathematics, Harbin Institute of Technology, Harbin 150001, China
\newline
\texttt{czhangmath@hit.edu.cn}}
\thanks{$^*$ Corresponding author.}
\thanks{{\bf Keywords}: Non-local diffusion; fractional partial differential equations; entropy solution; existence; $L^1$-data.}
\thanks{{\bf MSC 2020}: 35D99, 35R11, 45K05.}
\thanks{{\bf Acknowledgment}:
 This work was supported by the National Natural Science Foundation of China (No. 12071098).}
\begin{document}
\maketitle
\begin{abstract}
We consider the fully non-local diffusion equations with non-negative $L^1$-data. Based on the approximation and energy methods, we prove the existence and uniqueness of non-negative entropy solutions for such problems. In particular, our results are valid for the time-space fractional Laplacian equations.
\end{abstract}

\section{Introduction}
Let $\Omega$ be a bounded domain of $\rn (N>2)$. The present work is  concerned with the existence of solutions to the following integro-differential  equation
\begin{equation}\label{eq:main}
\left\{\begin{array}{cl}
\partial_t(k*(u-u_0))+(-\Delta)^{s}u=f &\quad \text{in }  \Omega_T \equiv \Omega\times (0,T),\\
u=0 &\quad \text{in } (\rn \setminus \Omega)\times (0,T),\\
u(x,0)=u_0(x) &\quad  \text{in }  \Omega.
\end{array}\right.
\end{equation}
Here, $k\in L^{1}_{loc}(\mathbb{R^+})$ is a non-negative kernel that belongs to a  certain kernel class, and $k*v$ denotes the convolution on the positive half-line w.r.t the time variable, that is $(k*v)(t)=\int^{t}_{0}k(t-\tau)v(\tau)\,d\tau$ with $t\geq 0$. The fractional Laplacian operator $(-\Delta)^{s}$ with $0<s<1$  is defined as
\begin{align*}
    (-\Delta)^{s} u(x,t) :=& \quad {\rm P.V.} \int_{\rn}\frac{u(x,t)-u(y,t)}{|x-y|^{N+2s}}\,dy \\
    =& \quad \lim_{\epsilon\to 0}\int_{\rn\setminus B_\epsilon(x)}\frac{u(x,t)-u(y,t)}{|x-y|^{N+2s}}\,dy,
\end{align*}
where $(x,t)\in \rn \times \mathbb{R^{+}}$ and P.V. is a commonly used abbreviation in the principal value sense.  We consider the above problem with non-negative $L^1$-data, i.e.,
\[ 0\leq f\in L^1(\Omega_T) \quad \text{and} \quad 0\leq u_0\in L^1(\Omega).\]
The kernel $k$ satisfies the following condition:
\begin{itemize}
    \item[(Hk)]  $k$ is non-negative, non-increasing, and there  exists a kernel $l\in L^p(0,T)$ with $p>1$ such that $k*l=1$ in $(0, \infty)$.
\end{itemize}

Let $k_{\lambda}$ be the kernels which arised in characterisation \eqref{eqdefBlambda} of the  Yosida approximation. To be able to show the existence of entropy solutions to \eqref{eq:main}, we further assume that $k$ satisfies the following conditions:
\begin{itemize}
    \item[(K1)] There exist constants $C_1,C_2>0$ such that
    \[0\leq k_{\lambda}(t)\leq C_1k(t)+C_2,~\lambda>0,~t\in (0,T);\]
    \item[(K2)] $k\in {\rm AC}_{loc}((0,T])$ and there exist constants $C_1,C_2>0$ such that
     \[0\leq -k'_{\lambda}(t)\leq -C_1k'(t)+C_2,~\lambda>0,~t\in (0,T),\]
     where $k'$ represents the differentiation of $k$. Moreover, the convergence  $k'_{\lambda}(t)\to k'(t)$ as $\lambda\to 0$  holds  for almost every $t\in(0,T)$.
\end{itemize}

Note that kernels $k$ satisfying  $\rm(Hk)$  are in particular kernels of type $\mathcal{PC}$, see Definition~\ref{defk}. The kernels of type $\mathcal{PC}$ was introduced by Zacher in \cite{Rico2009weak}.  $\mathcal{PC}$-kernels cover most of the relevant integro-differential operators w.r.t. time  that appear in physics applications in the context of subdiffusion processes, which is a special case of anomalous diffusive behaviour.  We refer to \cite{Ralf2000the,Ralf2004the} and references therein for the physical background. There have been wide research activities \cite{jukka2016decay,Samko2003integro,vicente2015optimal,zacher2013aweak,Rico2008boundedness} on the kernels of  $\mathcal{PC}$ type.

Next, we shall give some examples of $\mathcal{PC}$-kernels that satisfy the conditions (Hk), (K1) and (K2). %We refer to \cite{scholtes2018existence} for more information about the examples.

\begin{example}
The most typical example is given by
  \[ k(t)=g_{1-\alpha}(t),~ l(t)=g_{\alpha}(t),\quad t>0,\]
where $\alpha\in (0,1)$ and $g_{\beta}$ denotes the Riemann-Liouville kernel
\[g_{\beta}=\frac{t^{\beta-1}}{\Gamma(\beta)},\quad t>0, \quad\beta>0.\]
In this case, the term $\partial_{t}(k*\cdot)$ denotes the  fractional time derivative of order $\alpha$  in the sense of  Riemann-Liouville, and $k*\partial_{t}(\cdot)$ the Caputo derivative if $v$ is sufficient smooth, see  \cite[Example 2.7]{scholtes2018existence}.
\end{example}
\begin{example} Another example is the time-fractional case with exponential weight, i.e.,
\[k(t)=g_{1-\alpha}(t)e^{-\mu t}, \quad l(t)= g_{\alpha}(t)e^{-\mu t}+\mu (1*[g_{\alpha}(\cdot)e^{-\mu\cdot}])(t),\quad t>0,\]
where $\mu >0$ and $\alpha\in (0,1)$. We refer to  \cite[Example 2.8]{scholtes2018existence} for more information.
\end{example}
%\begin{example}
%   In the case of ultra-slow diffusion, the example is given by
%    \[k(t)=\int^{1}_{0}g_{\beta}(t)\,d\beta,\quad l(t)=\int^{\infty}_{0}\frac{e^{-st}}{1+s}\,ds.\]
%    We refer to  \cite[Example 6.5]{vicente2015optimal} for more information.
%\end{example}

Our focus in this manuscript is to establish the existence and uniqueness of solutions for problem \eqref{eq:main}. As we consider problem with data of low integrability, it is reasonable to work with entropy solutions, which need less regularity of the data than usual weak solutions. The notion of entropy solutions has been proposed  by B\'{e}nilan et al.\cite{benilan1995an} for the study of  the nonlinear elliptic problems.  There already exists a vast literature that concerns about the entropy solutions for elliptic and parabolic problems with  $L^1$-data, see \cite{boris2008on,lucio1996existence,k2019entropy,li2021entropy,zhang2010entropy,zhang2010renormalized}.

In recent years, the study of  the fractional Laplacian operators and non-local operators have received significant attention. The main reason lies in  its wide range of applications, such as  continuum mechanics, phase transition phenomena, population dynamics, image process, game theory and so on. We refer to \cite{luis2012nonlocal,luis2007an,luis2011uniform,Ralf2004the} and the references therein  for more details. We point out that the linear elliptic problems with non-local operator $(-\Delta)^{s}_{p}$ in the case $p=2$ were  considered in \cite{nathael2010renormalized,Kenneth2011a,Tommaso2015basic}.  The corresponding parabolic problem was also discussed in \cite{Tommaso2015basic}. It is worth mentioning that in \cite{nathael2010renormalized}, Alibaud et al.  established the existence and uniqueness of solution for the problem
\[\beta(u)+(-\Delta)^su  \ni f \quad \text{in}~\mathbb{R}^N,\]
where $f\in L^1(\mathbb{R}^N)$ and $\beta$ is a maximal monotone graph in $\mathbb{R}$. Moreover, there are  a large number of papers devoted to the study of elliptic and parabolic equations with non-local operators $(-\Delta)^{s}_{p}$ in the case  $p\neq 2$. The existence of a unique entropy positive solution to fractional elliptic $p$-Laplacian equation with weight and general datum was developed by Abdellaoui et al. \cite{abdellaoui2019on},  see also  \cite{abdellaoui2018on} for the parabolic framework. For more recent works concern about the non-local operator $(-\Delta)^s_p$, we refer the reader to \cite{Janne2019equiv,Tuomo2015nonlocal,mazon2016frac,Teng2019renormalized}.

As to the problems with integro-differential operators, we mention that Jakubowski and  Wittbold  \cite{Volker2004on} generalized the concept of entropy solutions for parabolic equations with $L^1$-data and considered a class of nonlinear history-dependent degenerated elliptic-parabolic equation. The existence of entropy solutions for the doubly nonlinear history-dependent problems of the form
\[\partial_{t}[k*(b(u)-b(u_0))]- \DIV a(x,\nabla u)=f\]
was obtained by Scholtes and Wittbold in \cite{scholtes2018existence}, where $k$ is assumed to be of type $\mathcal{PC}$, $b$ is strictly increasing and continuous  satisfying $b(0)=0$, and $a$ is a Carath\'{e}dory function  satisfying the Leary-Lions condition. Sapountzoglou \cite{niklas2020entropy} generalized this result  to the case that $b$ is not strictly increasing.  In addition,  \cite{schmitz2023entropy,petra2021bounded} were devoted to the study of  existence of solutions for the time-fractional porous medium type equations, which were equipped with bounded measurable coefficients that may explicitly depend on time. Moreover,  we point out that an abstract evolutionary  integro-differential equation in Hilbert spaces with a kernel $k$ of type $\mathcal{PC}$ was investigated in \cite{Rico2009weak}. Vergara and Zacher \cite{vicente2015optimal} have studied the decay estimates of time-fractional equations via energy methods, see also ~\cite{jukka2016decay} on this issue.

The equation  which is non-local both in space and time is called  the fully non-local diffusion equation.  Fully non-local equations play a crucial role in model situations with long-range interactions and memory effects and have been proposed  to describe plasma transport, see \cite{D1,D2}. The study of fully non-local equations has  attracted considerable interest during recent years. Allen, Caffarelli and Vasseur \cite{Mark2016a} have discussed  the regularity of weak solutions to a parabolic problem with fractional diffusion in space and a fractional time derivative. Kim and Lim \cite{kim2016asy} have considered the behavior of fundamental solutions  to time-space fractional differential equations. The decay estimates for time-space fractional equations were investigated  in \cite{Jukka2017repre,dipierro2019decay}. For other recent developments on the issues of the fully non-local problems, let us refer to \cite{fu2022global,Dier2020on,Corta2021}.

To our best knowledge, there is no result concerns about the fully non-local problems with the right-hand side is merely integrable. Inspired by the papers mentioned above, we aim  to establish the existence and uniqueness results of entropy solution for the fully non-local  equation \eqref{eq:main}. We point out that we cannot use the method of Steklov average in time to obtain the  appropriate time regularization since the Steklov average operators and convolution do not commute. We shall introduce a  regularization in time which adapts to the non-local nature of the problem \eqref{eq:main}, see Definition \ref{defshijian}. The method is a modification of the regularization method proposed by Landes \cite{landes1989on} and has been used in  \cite{scholtes2018existence,niklas2020entropy}. It is important to note  that the authors in  \cite{scholtes2018existence,niklas2020entropy} obtained the existence and uniqueness of entropy solutions for the time-fractional problems by using the theory of accretive operators. They showed that the generalized solution of the associated abstract Volterra equation is an entropy solution. Unlike the proofs of  \cite{scholtes2018existence,niklas2020entropy}, we first construct an approximate problem of \eqref{eq:main} and obtain the existence and uniqueness of weak solutions for the approximate problem by using the results in \cite{Rico2009weak}. Next, we shall establish some a priori estimates for the approximate solution sequence.  Then we draw a subsequence to obtain a limit function and prove that the limit function is the entropy solution for problem \eqref{eq:main}. The uniqueness of the entropy solution is obtained by a comparison  principle.
In addition,  we use the method of Yosida approximation to regularize  the kernel $k$ which will be discussed in Section 2.
This method has already been used in \cite{niklas2020entropy,zacher2013aweak,Rico2009weak,Vergara2008lyapunov}.

Before giving the main results of this manuscript,  we first introduce some functional settings that will be used below. We refer to \cite{Eleonora2012hit,Tommaso2015basic,Teng2019renormalized} for more details.

    Let $s\in (0,1)$ and $p>1$. The fractional Sobolev space
    \[W^{s,p}(\mathbb{R}^N) \equiv   \bigg\{ u\in L^p(\mathbb{R}^N): \int_{\mathbb{R}^N}\int_{\mathbb{R}^N}\frac{|u(x)-u(y)|^p}{|x-y|^{N+ps}}\,dxdy <\infty \bigg\}\]
    is a Banach space endowed with the norm
    \[\|u\|_{W^{s,p}(\mathbb{R}^N)}=\|u\|_{L^p(\mathbb{R}^N)}+\bigg( \int_{\mathbb{R}^N}\int_{\mathbb{R}^N}\frac{|u(x)-u(y)|^p}{|x-y|^{N+ps}}\,dxdy\bigg)^{\frac{1}{p}}.
\]
For $\omega\in W^{s,p}(\mathbb{R}^N)$, we define the fractional $p$-Laplacian as
\[(-\Delta)^{s}_{p}\omega(x)= {\rm P.V.}\int_{\mathbb{R}^N}\frac{|\omega(x)-\omega(y)|^{p-2}(\omega(x)-\omega(y))}{|x-y|^{N+ps}}\,dy.\]
Note that  for all $\omega,v\in W^{s,p}(\mathbb{R}^N)$, we have
\[\langle(-\Delta)^{s}_{p}\omega(x), v\rangle=\frac{1}{2}\int_{\mathbb{R}^N}\int_{\mathbb{R}^N} |\omega(x)-\omega(y)|^{p-2}(\omega(x)-\omega(y)) (v(x)-v(y))\,dv.\]

Denote $\mathcal{D}_{\Omega}=(\mathbb{R}^N\times\mathbb{R}^N)\setminus (
\mathcal{C}\Omega\times \mathcal{C}\Omega)$, where $\mathcal{C}\Omega=\mathbb{R}^N\setminus \Omega$.
$X^{s,p}$ denotes the linear space of Lebesuge measurable function $u:\mathbb{R}^N\to \mathbb{R}$ such that the quantity
\[X^{s,p}(\Omega)=\bigg(  \int_{\Omega}|u|^p\,dx +\int_{\mathcal{D}_{\Omega}}\frac{|u(x)-u(y)|^p}{|x-y|^{N+sp}}\,dxdy
\bigg)\]
is finite. $X_{0}^{s,p}$  denotes the space of functions $u\in X^{s,p} $
that vanish a.e. in $\mathcal{C}\Omega$. As it is  explained in \cite{Eleonora2012hit,Teng2019renormalized},  we know that there exists a positive constant $C$ such that for any $u\in X^{s,p}_{0}(\Omega)$,
\[\int_{\mathcal{D}_{\Omega}}|u(x)-u(y)|^p\,dv \leq \|u\|^p_{W^{s,p}(\mathbb{R}^N)}\leq C\int_{\mathcal{D}_{\Omega}}|u(x)-u(y)|^p\,dv, \]
where
\[dv=\frac{dxdy}{|x-y|^{N+sp}}.\]
Thus, we can endow $X^{s,p}_{0}(\Omega)$ with the equivalent norm
\[\|u\|_{X^{s,p}_{0}(\Omega)}=\bigg(\int_{\mathcal{D}_{\Omega}} |u(x)-u(y)|^p\,dv\bigg)^{\frac{1}{p}}.\]
It is obvious that for all $\omega, v \in X_{0}^{s,p}(\Omega)$,
\[\langle(-\Delta)^{s}_{p}\omega(x), v\rangle=\frac{1}{2}\int_{\mathcal{D}_{\Omega}}|\omega(x)-\omega(y)|^{p-2}(\omega(x)-\omega(y)) (v(x)-v(y))\,dv.\]
Moreover, we notice that $(-\Delta)^{s}_{p}: X^{s,p}_{0}(\Omega)\hookrightarrow (X^{s,p}_{0}(\Omega))' $, where $(X^{s,p}_{0}(\Omega))'$ denotes  the dual space of $X^{s,p}_{0}(\Omega)$.
In addition, we claim that the space $L^p(0,T; X^{s,p}_{0}(\Omega))$ is defined as the set of function $u$ such that $u\in L^p(\Omega_{T})$ with $\|u\|_{L^p(0,T;X^{s,p}_{0}(\Omega))}<\infty$, where
\[\|u\|_{L^p(0,T;X^{s,p}_{0}(\Omega))}=\bigg(\int^{T}_{0}\int_{\mathcal{D}_{\Omega}}|u(x,t)-u(y,t)|^p\,dvdt\bigg)^{\frac{1}{p}}.\]
We define an entropy solution to problem \eqref{eq:main} based on the definition in \cite{Volker2004on}. We set
\[\mathcal{P}:=\{S\in C^1{(\mathbb{R})}:0\leq S' \leq 1, \quad \text{supp}\, S'~\text{compact}, \quad S(0)=0\}.\]
In the following definition and throughout the paper, $T_{K}$ denotes the truncation function at level $K>0$:
\[T_{K}(r):=\max\{ \min(r,K),-K\}.\]
Moreover, we denote
$T_{K,M}(r):=T_{M}(r)-T_K(r
)$ for $M>K>0$.
\begin{Def}
    A measurable function $u\in L^1(\Omega_T)$ with $T_{k}(u)\in L^2(0,T;X^{s,2}_{0}(\Omega))$ is called an entropy solution to \eqref{eq:main}  if
    \begin{align}\label{Defentropysoluiton}
        &-\int^{T}_{0}\int_{\Omega}\zeta_t\bigg[k_1*\int^{u}_{u_0}S(\sigma-\phi)\,d\sigma\bigg]\,dxdt+\int^{T}_{0}\int_{\Omega}\zeta\partial_t[k_2*(u-u_0)]S(u-\phi)\,dxdt\\
        &\quad +\frac{1}{2}\int^{T}_{0}\int_{\mathcal{D}_{\Omega}} (u(x)-u(y))\bigg(S(u(x,t)-\phi(x,t))-S(u(y,t)-\phi(y,t))\bigg)\zeta\,dvdt\nonumber\\
        &\quad\leq \int^{T}_{0}\int_{\Omega}\zeta fS(u-\phi)\,dxdt\nonumber
    \end{align}
  for all $\phi\in X^{s,2}_{0}(\Omega)\cap L^{\infty}(\Omega)$, $\zeta\in C^{\infty}_{0}([0,T))$, $\zeta\geq 0$, $S\in \mathcal{P}$, and $k_1,k_2 \in L^1(0,T)$ non-increasing and non-negative with $k=k_1+k_2$ and $k_2(0^+)<\infty$.
\end{Def}

Our main results read as follows.
\begin{thm}\label{theorem:main}
 Let $0\leq f\in L^1(\Omega_T)$, $0\leq u_0\in L^1(\Omega)$ and let $k$  satisfy the conditions \rm(Hk), \rm(K1) and \rm(K2). Then there exists an entropy  solution for problem \eqref{eq:main}.
\end{thm}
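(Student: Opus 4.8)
The plan is to realize the entropy solution as the limit of weak solutions to a doubly regularized problem, combining a truncation of the data with a Yosida regularization of the memory kernel. First I would replace the data by the bounded, monotone approximations $f_n:=T_n(f)$ and $u_{0,n}:=T_n(u_0)$, and replace $k$ by its Yosida approximation $k_\lambda$ (the kernels appearing in \eqref{eqdefBlambda}), which is regular enough that $k_\lambda(0^+)<\infty$. For each fixed pair $(\lambda,n)$ the regularized problem
\[
\partial_t(k_\lambda*(u-u_{0,n}))+(-\Delta)^{s}u=f_n
\]
has bounded right-hand side and initial datum in $L^2(\Omega)$, so the abstract theory for evolutionary integro-differential equations with $\mathcal{PC}$-kernels developed in \cite{Rico2009weak} yields a unique weak solution $u_{\lambda,n}\in L^2(0,T;X^{s,2}_{0}(\Omega))$. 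Because the data are non-negative and $(-\Delta)^s$ is monotone, a comparison argument gives $u_{\lambda,n}\geq 0$ together with monotonicity of $u_{\lambda,n}$ in $n$, which will later feed a monotone-convergence compactness argument.

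The next step is the a priori estimates, which are the technical heart of the construction. Testing the regularized equation with $T_K(u_{\lambda,n})$ and with $S(u_{\lambda,n})$, and invoking the fundamental convexity inequality for the memory operator $\partial_t(k_\lambda*\cdot)$ — namely that for convex $C^1$ functions $H$ one has $H'(v)\,\partial_t(k_\lambda*(v-v_0))\geq \partial_t(k_\lambda*(H(v)-H(v_0)))$ — I would bound the memory contribution from below by a non-negative term plus controllable boundary terms. This produces, uniformly in $\lambda$ and $n$, an $L^1(\Omega_T)$ bound on $u_{\lambda,n}$ together with the key truncation estimate
\[
\int_0^T\|T_K(u_{\lambda,n})\|^2_{X^{s,2}_{0}(\Omega)}\,dt\leq C\,K\big(\|f\|_{L^1(\Omega_T)}+\|u_0\|_{L^1(\Omega)}\big).
\]
Conditions (K1) and (K2) are precisely what makes these bounds uniform in $\lambda$, since they dominate $k_\lambda$ and $k_\lambda'$ by $k$ and guarantee $k_\lambda'\to k'$ a.e.; they also let me send $\lambda\to 0$ first, producing a solution sequence $u_n$ for the original kernel $k$ that retains the above estimates. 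From the truncation estimate one extracts, by a nonlocal Marcinkiewicz-type argument, a uniform bound for $u_n$ in $L^q(\Omega_T)$ for a suitable $q>1$ and a bound for the nonlocal difference quotient in a Marcinkiewicz space.

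I would then extract a subsequence with $u_n\to u$ a.e.\ and in $L^1(\Omega_T)$, and $T_K(u_n)\rightharpoonup T_K(u)$ weakly in $L^2(0,T;X^{s,2}_{0}(\Omega))$ for every $K$. The final task is to pass to the limit in the entropy inequality \eqref{Defentropysoluiton}. The two memory terms I would handle by first rewriting them through the time regularization of Definition \ref{defshijian}, which (unlike the Steklov average) commutes with the convolution and legitimizes the integration by parts that produces the split $k=k_1+k_2$; the a.e.\ convergence of $u_n$ and the boundedness of $S\in\mathcal{P}$ then give convergence of these terms. For the nonlocal diffusion term I would combine the weak convergence of the truncations with the a.e.\ convergence of $u_n$ and the monotonicity of $(-\Delta)^s$. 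Since the entropy formulation is only an \emph{inequality}, it suffices to pass to the limit by weak lower semicontinuity on the diffusion term and by dominated convergence on the right-hand side $\int\!\!\int \zeta f S(u-\phi)$.

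The main obstacle I anticipate is twofold. Establishing strong $L^1$-compactness in time is delicate because the usual Aubin--Lions machinery does not apply to the memory operator $\partial_t(k*\cdot)$; here I would lean on the monotone convergence coming from the ordering of $u_n$ in $n$, supplemented by estimates on $k*(u_n-u_{0,n})$, to upgrade weak to strong convergence. Equally delicate is the limit in the nonlinear nonlocal term $(u(x)-u(y))(S(u(x,t)-\phi)-S(u(y,t)-\phi))$: one must exploit that $S$ is non-decreasing so that, after isolating the weakly convergent part, the remaining contribution carries a favorable sign and can be dropped by lower semicontinuity, yielding the required inequality rather than an identity.
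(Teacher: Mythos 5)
Your proposal shares the paper's skeleton---truncate the data, solve the approximate problems, derive the truncation energy estimate from the convexity inequalities (Lemma~\ref{lemH}, Lemma~\ref{lem:equality}), use the comparison principle to get monotonicity in $m$ and hence a.e.\ and $L^1$ convergence, then pass to the limit in the entropy inequality---but it diverges at the one place where the paper invests essentially all of its technical effort, and that is where your argument, as written, has a gap. The paper proves \emph{strong} convergence $T_K(u^m)\to T_K(u)$ in $L^2(0,T;X^{s,2}_0(\Omega))$ (its Step~2: testing with $T_K(u^m)-h_l(u^m)(T_K(u))_\mu$, the decomposition \eqref{eq:step2main}, the sets $D_1,\dots,D_9$), and it uses this precisely to pass to the limit in the part of the nonlocal term that carries no sign, namely
\begin{equation*}
\int_0^T\!\!\int_{\mathcal{D}_\Omega}\zeta\,\bigl(\phi(x,t)-\phi(y,t)\bigr)\bigl[S(w^m(x,t))-S(w^m(y,t))\bigr]\,dv\,dt ,\qquad w^m=u^m-\phi .
\end{equation*}
You retain only \emph{weak} convergence of the truncations and assert that ``weak lower semicontinuity'' plus the sign of the remaining part suffices. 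The signed part $\bigl(w^m(x)-w^m(y)\bigr)\bigl[S(w^m(x))-S(w^m(y))\bigr]\ge 0$ is indeed handled by Fatou---note it must be \emph{kept} via lower semicontinuity, not ``dropped,'' since it is a term of the inequality \eqref{Defentropysoluiton} you are proving. But for the $\phi$-part, weak convergence of $T_K(u^m)$ by itself proves nothing, because $S(u^m-\phi)$ is nonlinear in $u^m$. The gap is fillable here: since $0\le S'\le 1$ and $\operatorname{supp} S'\subset[-M,M]$, one has $|S(a)-S(b)|\le|T_M(a)-T_M(b)|$, whence $|S(w^m(x))-S(w^m(y))|\le |T_{M+\|\phi\|_\infty}(u^m(x))-T_{M+\|\phi\|_\infty}(u^m(y))|+|\phi(x)-\phi(y)|$; so the difference quotients of $S(w^m)$ are bounded in $L^2(\mathcal{D}_\Omega\times(0,T),dv\,dt)$ uniformly in $m$ by the truncation estimate and converge a.e., hence weakly in $L^2(dv\,dt)$, and the pairing with the fixed function $\zeta(\phi(x)-\phi(y))|x-y|^{-(N+2s)/2}\in L^2(dv\,dt)$ passes to the limit. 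With this inserted, your route closes and is genuinely shorter than the paper's Step~2---but only because $p=2$ makes the troublesome term \emph{linear} in the difference quotients of $S(w^m)$; the paper's strong-convergence machinery is what an extension to $p\neq 2$ would require.

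Two further corrections. First, your handling of the memory terms invokes the wrong tool: the Landes-type regularization of Definition~\ref{defshijian} is used by the paper only inside the strong-convergence step you skip. What legitimizes the integration by parts producing $-\int\zeta_t\bigl[k_1*\int_{u_0^m}^{u^m}S(\sigma-\phi)\,d\sigma\bigr]$ in \eqref{eq:entropy} is the Yosida approximation $k_{1,\lambda}$ of $\partial_t(k_1*\cdot)$ combined with Lemma~\ref{lemH}, followed by $\lambda\to 0$ using $u^m-u_0^m\in D(B^1)$; the $k_2$-part passes to the limit directly because $k_2(0^+)<\infty$ makes $\partial_t[k_2*(u^m-u_0^m)]$ convergent in $L^1(\Omega_T)$. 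Second, your double regularization (solving first with $k_\lambda$ and sending $\lambda\to0$ at the PDE level) and the Marcinkiewicz/$L^q$ detour are unnecessary: (Hk) already places $k$ in the class covered by \cite{Rico2009weak}, which is how the paper obtains $u^m$ for the kernel $k$ itself, and monotonicity from Lemma~\ref{lemmabijiao} yields a.e.\ and $L^1$ convergence by monotone convergence, with no Boccardo--Gallou\"{e}t-type estimate needed. These detours are harmless (the $\lambda$-limit is fine for the linear problem with fixed bounded data) but add length without adding generality.
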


\begin{prop}[ \textbf{Comparison  principle}]\label{prop}
    Suppose that $u^{i}(i=1,2)$ are entropy solutions to problem \eqref{eq:main} with $f=f^{i}\in L^1(\Omega_{T})$ and $u_0=u^{i}_0\in L^1(\Omega)$. Then we have
    \begin{equation}\label{eqprop1}
     \int^{T}_{0}\int_{\Omega}(u^1-u^2)^+\,dxdt \leq T\int_{\Omega} (u_{0}^{1}-u^{2}_{0})^+\,dx+\|l\|_{L^1(0,T)}\int^{T}_{0}\int_{\Omega}(f^1-f^2)^+\,dxdt
     \end{equation}
     and
     \begin{equation}\label{eqprop2}
 \|u^1-u^2\|_{L^1(\Omega_{T})}\leq T\|u_{0}^{1}-u^{2}_{0}\|_{L^{1}(\Omega)}+\|l\|_{L^1(0,T)}\|f^1-f^2\|_{L^1(\Omega_{T})}.
         \end{equation}
\end{prop}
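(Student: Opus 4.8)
The plan is to reduce the problem to a scalar fractional differential inequality for the quantity $W(t):=\int_\Omega (u^1-u^2)^+(x,t)\,dx$ and then to invert the time convolution using the resolvent kernel $l$. Concretely, I aim to establish, in a weak sense, the inequality
\[
\partial_t\big(k*(W-W_0)\big)\le G \qquad\text{on }(0,T),
\]
where $W_0:=\int_\Omega (u_0^1-u_0^2)^+\,dx$ and $G(t):=\int_\Omega (f^1-f^2)^+(x,t)\,dx$. Once this is in hand, since $l\ge 0$ and $k*l=1$, convolving with $l$ and using the inversion identity $l*\partial_t\big(k*(W-W_0)\big)=W-W_0$ gives $W(t)\le W_0+(l*G)(t)$; integrating over $(0,T)$ and applying Young's inequality $\int_0^T (l*G)\,dt\le \|l\|_{L^1(0,T)}\int_0^T G\,dt$ then yields exactly \eqref{eqprop1}. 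Estimate \eqref{eqprop2} follows by exchanging the roles of $u^1$ and $u^2$, adding the two resulting inequalities, and using $|r|=r^++(-r)^+$.

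To obtain the differential inequality I would combine the two entropy inequalities. In the inequality for $u^1$ I take $S=S_\delta\in\mathcal{P}$ a smooth approximation of the Heaviside function (so that $\Psi_\delta(r):=\int_0^r S_\delta\to r^+$) and $\phi$ a time regularization of $u^2$; in the inequality for $u^2$ I take $\tilde S_\delta(r):=-S_\delta(-r)\in\mathcal{P}$ and $\phi$ a time regularization of $u^1$, so that $\tilde S_\delta(u^2-\phi)\approx -S_\delta(u^1-u^2)$. Writing $w:=u^1-u^2$ and adding the two inequalities, the two nonlocal diffusion terms combine into
\[
\tfrac12\int_{\mathcal{D}_\Omega}\big(w(x)-w(y)\big)\big(S_\delta(w(x))-S_\delta(w(y))\big)\,dv\ge 0
\]
by the monotonicity of $S_\delta$; this term sits on the left-hand side with a favourable sign and is simply discarded. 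On the right-hand side the data terms add up to $\int_\Omega (f^1-f^2)S_\delta(w)\,dx$, which converges to $\int_{\{u^1>u^2\}}(f^1-f^2)\,dx\le G(t)$ as $\delta\to0$.

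For the time terms I would exploit the splitting $k=k_1+k_2$ in the definition: the singular part $k_1$ enters weakly through $\zeta_t$, while the regular part $k_2$ (with $k_2(0^+)<\infty$) enters through $\zeta\partial_t$. After adding, the $k_2$-contributions combine to $\partial_t[k_2*(w-w_0)]\,S_\delta(w)$ with $w_0:=u_0^1-u_0^2$, and here the fundamental convexity (subdifferential) inequality for $\mathcal{PC}$-convolutions, $S_\delta(w)\,\partial_t[k_2*(w-w_0)]\ge \partial_t[k_2*(\Psi_\delta(w)-\Psi_\delta(w_0))]$, lets me pass to $\partial_t[k_2*((u^1-u^2)^+-(u_0^1-u_0^2)^+)]$ as $\delta\to0$; the $k_1$-part is handled analogously in its weak form. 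Recombining $k_1$ and $k_2$ and integrating in space reconstitutes $\partial_t(k*(W-W_0))$ tested against $\zeta$, and letting $\zeta\nearrow 1$ delivers the desired differential inequality.

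The main obstacle is the time term, for two reasons. First, the entropy formulation only admits time independent test functions $\phi\in X^{s,2}_0(\Omega)\cap L^\infty(\Omega)$, so I cannot directly insert the genuinely time dependent competitor $u^2(\cdot,t)$; this forces the Landes type regularization in time adapted to the nonlocal setting (Definition~\ref{defshijian}), and the delicate point is that the Steklov average does \emph{not} commute with the convolution $k*$, so the regularization must be constructed to be compatible with $k*$ and the resulting error terms must be controlled as the regularization parameter tends to zero. Second, the convexity inequality for $\mathcal{PC}$-kernels has to be justified at the level of the regularized objects and then passed to the limit jointly with $\delta\to0$; this is precisely where conditions (K1), (K2) and the Yosida approximation $k_\lambda$ enter, furnishing the monotonicity and the convergence $k_\lambda'\to k'$ needed to make the subdifferential inequality rigorous. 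Once these two limit passages are secured, the scalar inversion by $l$ and the final integration are routine.
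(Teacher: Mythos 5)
Your overall scheme (reduce to a scalar inequality $\partial_t\big(k*(W-W_0)\big)\le G$, invert with $l$, integrate in time) reproduces the paper's final steps, but the way you propose to reach that inequality contains a genuine gap. The central device of your argument is to insert a (Landes-regularized) version of $u^2$ as the test function $\phi$ in the entropy inequality \eqref{Defentropysoluiton} for $u^1$, and symmetrically for $u^2$. This is not admissible: the definition of entropy solution in this paper only allows $\phi\in X^{s,2}_{0}(\Omega)\cap L^{\infty}(\Omega)$, i.e.\ test functions that are independent of time and bounded, whereas $u^2$ is neither; and the regularization of Definition~\ref{defshijian} does not repair this, since $v_\mu(t)=\int_t^T r_\mu(\tau-t)v(\tau)\,d\tau$ is still a genuinely time-dependent function (and unbounded when $v$ is). In the paper, the Landes-type object $(T_K(u))_\mu$ is inserted into the \emph{weak} formulation \eqref{eqweaksolution} of the approximate problems, which does admit time-dependent test functions in $L^2(0,T;X^{s,2}_{0}(\Omega))\cap L^{\infty}(\Omega_T)$; it is never inserted into the entropy inequality. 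Nor can one "extend \eqref{Defentropysoluiton} to time-dependent $\phi$ by density'': the first term of \eqref{Defentropysoluiton} arises from Lemma~\ref{lemH} applied to the convex function $v\mapsto\int^{v}S(\sigma-\phi)\,d\sigma$, which is a fixed function of $v$ only because $\phi$ is time-independent; for time-dependent $\phi$ the chain-rule manipulation of the memory term produces additional terms, so the extended inequality would have a different form that would itself require proof. Without an admissible substitute for ``$\phi\approx u^2$'', your two entropy inequalities cannot be coupled, and the remaining steps (adding them, discarding the diffusion term, applying the convexity inequality to the $k_2$-part) have nothing to act on.

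The paper sidesteps this obstruction entirely: it proves the comparison at the level of the approximating weak solutions $u^{im}$ of \eqref{eq:mainappro}, where the weak formulation admits the time-dependent bounded test function $H'_{\eps}((u^{1m}-u^{2m})^+)$ with $H_\eps$ as in \eqref{eqH}; Lemma~\ref{lemH} applied to the Yosida kernels $k_\lambda$, the limit $\lambda\to0$, convolution with $l$, and $\eps\to0$ then give \eqref{eqbijiao1}, and finally one lets $m\to\infty$, using that the constructed entropy solutions are the (monotone) limits of the $u^{im}$. Note that if your doubling strategy could be carried out, it would yield something stronger than the paper's argument — a comparison between arbitrary entropy solutions rather than between those produced by the approximation scheme — but as proposed it stalls at the very first step, and fixing it would require either a proved extension of the entropy formulation to time-dependent (truncated) test functions or a return to the paper's approximation route.
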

\begin{remark}
Proposition~\ref{prop} provides that the entropy solution of problem \eqref{eq:main} is unique and non-negative.
\end{remark}

\section{Preliminaries}
In this section,  we shall give a short introduction to kernels of type $\mathcal{PC}$. For detailed discussion we refer the reader to \cite{clement1981asymptotic,Vergara2008lyapunov,Rico2009weak}.  We begin with the definition of the $\mathcal{PC}$-kernels.
\begin{Def}\label{defk}
    A kernel $k\in L^1_{loc}([0,\infty))$ is called to be of type $\mathcal{PC}$ if it is non-negative, non-increasing, and there exists a kernel $l\in L^1_{loc}([0,\infty))$ such that
    \begin{equation*}
        (k*l)(t)=1,\quad \forall t>0.
    \end{equation*}
    We write it as $(k,l)\in \mathcal{PC}$.
\end{Def}
We next introduce an important method of approximation kernels of type $\mathcal{PC}$. For $1\leq p<\infty$, $T>0$, and a real Banach space $X$ we consider the operator $B$ defined by
\begin{equation}\label{eqdefB}
   Bu=\partial_{t}(k*u),\quad D(B)=\{u\in L^p(0,T;X):k*u\in {}_{0}W^{1,p}(0,T;X)\},
\end{equation}
where  the zero means vanishing at $t=0$. It is known that this operator is $m$-accretive in $L^p(0,T;X)$, see \cite{clement1990completely,clement1992global}.
Its Yosida approximation $B_{\lambda}$, defined by $B_{\lambda}=\frac{B}{I+\lambda B}$, $\lambda>0$, enjoy the property that for any $u\in D(B)$,
\[ B_{\lambda}u\to Bu \quad \text{in}~L^p(0,T;X)  \text{ as}~\lambda \to 0. \]
Denote  $J^B_{\lambda}=(I+\lambda B)^{-1}$, $\lambda>0$, it had been proved in \cite[Theorem 2.1]{Vergara2008lyapunov} that the Yosida approximation $B_{\lambda}=BJ^{B}_{\lambda}$ could be wirtten in the form
\begin{equation}\label{eqdefBlambda}
    B_{\lambda}u=\frac{B}{I+\lambda B}u=BJ^{B}_{\lambda}u=\partial_{t} (k_{\lambda}*u),
\end{equation}
where $k_{\lambda}=\frac{s_{\lambda}}{\lambda}=k*r_{\lambda}$, and $s_{\lambda},r_{\lambda}\in L^1_{loc}(\R^+)$ are defined by  the scalar Volterra equations
\begin{equation*}\label{eqS}
    \begin{split}
     &s_{\lambda}(t)+\lambda^{-1} (l*s_{\lambda})(t)=1, \qquad t>0, \\
& r_{\lambda}(t)+\lambda^{-1} (l*r_{\lambda})(t)=\lambda^{-1}l(t), \qquad t>0.
\end{split}
\end{equation*}
%In fact, since $k*l=1$ and $(k*v)(0)=0$, we know that
%\begin{equation*}
 %   v+\lambda Bv= (I+\lambda B)v=g \qquad \text{on} \qquad (0,T)
%\end{equation*}
%is equivalent to
%\begin{equation*}
 %   l*v+\lambda v=l*g \qquad \text{on} \qquad (0,T)
%\end{equation*}
%for all $v\in D(B)$. Convolving the above equation with $s_{\lambda}$ and $r_{\lambda}$ respectively, we obtain
%\[  s_{\lambda}*l*v+\lambda v*s_{\lambda}=l*g*s_{\lambda} \qquad \text{on} \qquad (0,T),\]
%and
%\[r_{\lambda}*l*v+\lambda v*r_{\lambda}=l*g*r_{\lambda} \qquad \text{on} \qquad (0,T).\]
%Combining with \eqref{eqS}, it follows that $1*v= \lambda^{-1} %(l*s_{\lambda}*g)$ and $v=r_{\lambda}*g$, which implies that
%\[v=(I+\lambda B)^{-1}g=J^B_{\lambda}g=\lambda^{-1}  \partial_{t}  (l*s_{\lambda}*g)=r_{\lambda}*g. \]
%Thus, we have
%\begin{equation}
 %   B_{\lambda}g=\frac{B}{I+\lambda B}g= \partial_{t}(k*r_{\lambda}*g)=\partial_{t}\frac{s_{\lambda}}{\lambda}*g=\partial_{t}k_{\lambda}*g.
%\end{equation}
Moreover, we have
\[J^B_{\lambda}u=\frac{u}{(I+\lambda B)}=r_{\lambda}*u.\]
In addition, according to the formula for the variation of the constants of the Volterra integral equation \cite[Theorem 3.5]{Gripenberg1990volterra}, it follows that
\begin{equation*}
    s_{\lambda}(t)=1-\int^{t}_{0}r_{\lambda}(\tau)d\tau,~ t>0,\lambda>0.
\end{equation*}
Thus, we know that $s_{\lambda}\in W^{1,1}(0,T)$. We point out that both $s_{\lambda}$ and $r_{\lambda}$ are non-negative for all $\lambda>0$. This is a consequence of  $l$ is completely positive as $(k,l)\in\mathcal{PC}$, see \cite[Theorem~2.2]{clement1981asymptotic}.
It is further known that the kernels $k_{\lambda}$ are also non-negative and non-increasing and  belong to $W^{1,1}(0,T)$.  We note
that for any  $v\in L^p(0,T; X)$,
\begin{equation}
    J^{B}_{\lambda}v =r_{\lambda}*v \to v \quad \text{in } L^p(0,T; X)~\text{as}~\lambda\to 0.
\end{equation}
In particular, we have
\begin{equation}
    k_{\lambda}=r_{\lambda}*k \to k\quad \text{in}~L^1(0,T)~\text{as}~\lambda\to 0.
    \end{equation}
In fact, for any $v\in L^p(0,T; X)$, it is easy to check that
\[k*l*v=1*v=\int^{t}_{0}v(\tau)\,d\tau \in {}_{0}W^{1,p}(0,T;X),\]
which implies that
\[l*v\in D(B)=\{\omega\in L^p(0,T; X): k*\omega \in  {}_{0}W^{1,p}(0,T;X) \}.\]
Hence, we deduce that
\[r_{\lambda}*v= \partial_{t}(k*r_{\lambda}*l*v)=\partial_{t}(k_{\lambda}*l*v)=
B_{\lambda}(l*v)
\to B(l*v)=v\]
in $L^p(0,T;X)$ as $\lambda\to 0$.

Next, we will give a  regularization in time which adapts to the nonlocal nature of the problem \eqref{eq:main}. We point out that the method is a modification of the regularzation method proposed by Landes \cite{landes1989on}.
\begin{Def}\label{defshijian}
    Let $X$ be a Banach space, $X'$ its dual and $1\leq p'<\infty$. For $v\in L^{p'}(0,T;X')$, let $v_{\mu}\in L^{p'}(0,T;X')$ be defined by
    \begin{equation}\label{eqmu}
        v_{\mu}(t)=\int^{T}_{t}r_{\mu}(\tau-t)v(\tau)\,d\tau,~t\in (0,T),~\mu>0.
    \end{equation}
\end{Def}
Note that  $v_{\mu}=J^{B^*}_{\mu}v$ for $p'=\frac{p}{p-1}$, where $B^*:D(B^*)\subset L^{p'}(0,T;X')\to L^{p'}(0,T;X')$ is the adjoint of the operator $B$ defined in \eqref{eqdefB}. Thus, we have $v_{\mu}\to v$ in $L^{p'}(0,T;X')$ for any $v\in L^{p'}(0,T;X')$ as $\mu\to 0$.
\begin{lem}\rm \cite[Lemma 2.3]{scholtes2018existence}
    Let $\mu>0$, and assume that $v \in L^\infty(\Omega_T)$. Then $v_{\mu}\in L^{\infty}(\Omega_{T})$ and
    \begin{equation*}
    \|v_{\mu}\|_{L^{\infty}(\Omega_{T})}\leq \|v\|_{L^{\infty}(\Omega_{T})}.
    \end{equation*}
\end{lem}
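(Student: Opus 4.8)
The plan is to prove the statement by a pointwise-in-$(x,t)$ estimate, exploiting two structural facts about $\mathcal{PC}$-kernels already recorded in the excerpt: the non-negativity of $r_\mu$ and the variation-of-constants identity $s_\mu = 1 - 1*r_\mu$. First I would fix $x\in\Omega$ and $t\in(0,T)$ and use the non-negativity of $r_\mu$ (noted in the text as a consequence of the complete positivity of $l$, since $(k,l)\in\mathcal{PC}$) to pull the $L^\infty$ norm out of the defining integral \eqref{eqmu}:
\[
|v_\mu(x,t)| \leq \int_t^T r_\mu(\tau-t)\,|v(x,\tau)|\,d\tau \leq \|v\|_{L^\infty(\Omega_T)}\int_t^T r_\mu(\tau-t)\,d\tau .
\]
Thus the problem reduces to showing that the $t$-dependent weight $\int_t^T r_\mu(\tau-t)\,d\tau$ is bounded by $1$, uniformly in $t\in(0,T)$.

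To control this weight I would change variables $\sigma=\tau-t$, which gives
\[
\int_t^T r_\mu(\tau-t)\,d\tau = \int_0^{T-t} r_\mu(\sigma)\,d\sigma .
\]
The key step is then the identity established earlier in the excerpt, namely $s_\mu(t)=1-\int_0^t r_\mu(\tau)\,d\tau$ for $t>0$ and $\mu>0$, which rearranges to $\int_0^{T-t} r_\mu(\sigma)\,d\sigma = 1 - s_\mu(T-t)$. Since $s_\mu$ is non-negative for all $\mu>0$ (again by complete positivity of $l$), I conclude
\[
\int_0^{T-t} r_\mu(\sigma)\,d\sigma = 1 - s_\mu(T-t) \leq 1 .
\]

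Combining the two displays yields $|v_\mu(x,t)|\leq \|v\|_{L^\infty(\Omega_T)}$ for a.e.\ $(x,t)\in\Omega_T$. Taking the essential supremum over $\Omega_T$ then gives simultaneously that $v_\mu\in L^\infty(\Omega_T)$ and the asserted contraction $\|v_\mu\|_{L^\infty(\Omega_T)}\leq\|v\|_{L^\infty(\Omega_T)}$. Measurability of $v_\mu$ is inherited from its definition \eqref{eqmu} as a convolution-type integral of the measurable function $v$, so no separate argument is needed there.

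I do not expect a genuine obstacle in this proof: it is essentially a one-line pointwise bound once the mass estimate $\int_0^{T-t} r_\mu \leq 1$ is available. The only point requiring care is that mass bound, and the useful observation is that it comes for free from the two facts already in hand — the non-negativity of both $r_\mu$ and $s_\mu$, and the relation $s_\mu = 1 - 1*r_\mu$ — rather than from any finer analysis of the kernel. The argument is carried out for each fixed spatial point $x$ precisely because $v$ depends on $(x,t)$, but this introduces no additional difficulty since the weight $\int_t^T r_\mu(\tau-t)\,d\tau$ is independent of $x$.
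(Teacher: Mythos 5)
Your proof is correct and is essentially the same argument as in the source the paper relies on: the paper does not reprove this lemma but cites \cite[Lemma 2.3]{scholtes2018existence}, where the contraction property is obtained exactly as you do, by pulling $\|v\|_{L^\infty(\Omega_T)}$ out of the integral via the non-negativity of $r_\mu$ and then invoking the mass bound $\int_0^{T-t} r_\mu(\sigma)\,d\sigma = 1-s_\mu(T-t)\leq 1$, which follows from the identity $s_\mu=1-1*r_\mu$ and the non-negativity of $s_\mu$ (both recorded in Section 2). No gap: all three ingredients you use are established in the paper before the lemma is stated.
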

Then, we give a fundamental identify for integro-differential operator of the form $\partial_{t}(k*u)$.
\begin{lem}\rm\label{lemconvex} Let $k\in W^{1,1}(0,T)$, $H\in C^1(\mathbb{R})$ and $u\in L^1(0,T)$ with $u(t)\in \mathbb{R}$ for almost all $t\in (0,T)$. Suppose that the functions $H(u)$, $H'(u)u$ and $H'(u)(k'*u)$ belong to $L^1(0,T)$. Then we have for almost all $t\in (0, T)$,
\begin{equation}\label{eqlemmaconvev}
\begin{split}
    & H'(u(t))\partial_{t}(k*u)(t)\\
    &\quad=\partial_{t}(k*H(u))(t)+(H'(u(t))u(t)-H(u(t)))k(t)\\
    &\qquad+\int^{t}_{0}\bigg(H(u(t-s))-H(u(t))-H'(u(t))[u(t-s)-u(t)]\bigg)[-k'(s)]\,ds.
\end{split}
\end{equation}
\end{lem}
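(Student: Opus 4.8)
The plan is to reduce the identity to the elementary differentiation rule for convolutions against a $W^{1,1}$ kernel and then verify it by a direct computation. The starting point is that, since $k\in W^{1,1}(0,T)$, the function $k$ has an absolutely continuous representative on $[0,T]$, so $k(0^+)$ is finite, and for every $w\in L^1(0,T)$ one has
\begin{equation*}
\partial_t(k*w)(t)=k(0^+)w(t)+(k'*w)(t)\qquad\text{for a.e. }t\in(0,T).
\end{equation*}
I would first record this fact (it follows by writing $k(r)=k(0^+)+\int_0^r k'(\sigma)\,d\sigma$, inserting it into the convolution, and differentiating). Applying it with $w=u$ and with $w=H(u)$ gives
\begin{equation*}
H'(u(t))\,\partial_t(k*u)(t)=H'(u(t))\big[k(0^+)u(t)+(k'*u)(t)\big],
\end{equation*}
and
\begin{equation*}
\partial_t(k*H(u))(t)=k(0^+)H(u(t))+(k'*H(u))(t).
\end{equation*}

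Next I would expand the defect integral on the right-hand side. Using the change of variables $\tau=t-s$ one checks that $\int_0^t H(u(t-s))[-k'(s)]\,ds=-(k'*H(u))(t)$ and $\int_0^t u(t-s)[-k'(s)]\,ds=-(k'*u)(t)$, while $\int_0^t[-k'(s)]\,ds=k(0^+)-k(t)$ by the fundamental theorem of calculus. Substituting these into
\begin{equation*}
\int_0^t\!\big(H(u(t-s))-H(u(t))-H'(u(t))[u(t-s)-u(t)]\big)[-k'(s)]\,ds
\end{equation*}
expresses the entire right-hand side through $k(0^+)u(t)$, $(k'*u)(t)$, $k(t)$, and the values $H(u(t)),H'(u(t))$. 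I would then collect terms: the two occurrences of $(k'*H(u))(t)$ cancel; the coefficient of $H(u(t))$ collapses to $k(0^+)-k(t)+\big(k(t)-k(0^+)\big)=0$; and the surviving $H'(u(t))$-terms combine to $H'(u(t))\big[k(0^+)u(t)+(k'*u)(t)\big]$, which is precisely the left-hand side. This closes the argument.

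The bulk of the work is bookkeeping, and the only genuinely analytic point is the justification of the differentiation rule $\partial_t(k*w)=k(0^+)w+k'*w$ in the almost-everywhere sense, together with the verification that $k*w\in W^{1,1}(0,T)$ whenever $w\in L^1(0,T)$, so that $\partial_t(k*w)$ is an honest $L^1$ function and its product with $H'(u)$ is meaningful. The three integrability hypotheses are exactly what guarantee that each grouped term, namely $H(u)\,k$, $H'(u)\,u\,k$, and $H'(u)(k'*u)$, lies in $L^1(0,T)$, and this is where the assumptions $H(u),H'(u)u,H'(u)(k'*u)\in L^1(0,T)$ are used. Should one wish to avoid manipulating $k(0^+)$ pointwise, an alternative is to prove the identity first for $u\in C^1([0,T])$ with a smooth kernel by ordinary differentiation and then pass to the limit by density together with the stated $L^1$-bounds; I expect the direct computation above to be the more economical route.
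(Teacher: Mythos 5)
Your proposal is correct and is essentially the proof the paper has in mind: the paper dispenses with details by calling the identity a ``straightforward computation'' (citing Zacher's work), and your argument—reducing everything to the differentiation rule $\partial_t(k*w)=k(0^+)w+(k'*w)$ for $k\in W^{1,1}(0,T)$, $w\in L^1(0,T)$, then expanding the defect integral and cancelling terms—is precisely that standard computation, with the integrability hypotheses invoked in the right places to make each grouped term finite a.e.
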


The assertion of Lemma~\ref{lemconvex} follows from a straightforward computation, see also \cite{zacher2013aweak,Rico2008boundedness}. We note that a more general version of \eqref{eqlemmaconvev} in integrated form can be found in \cite[Lemma 18.4.1]{Gripenberg1990volterra}. The following two lemmas are immediate consequence of \eqref{eqlemmaconvev}.

\begin{lem}\rm\cite[Corollary 6.1]{jukka2016decay}\label{lemH}
    Let   $u_0\in \mathbb{R}$, $k$, $H$ and $u$ be as in Lemma~\rm \ref{lemconvex}. And assume in addition that $k$ is non-negative and non-increasing and $H$ is convex. Then
    \begin{equation}
        H'(u(t)) \partial_{t}(k*(u-u_0))(t)\geq \partial_{t}(k*[H(u)-H(u_0)])(t)~\text{a.a.}~t\in (0,T).
    \end{equation}
\end{lem}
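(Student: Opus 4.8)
The final statement to prove is Lemma~\ref{lemH}, which asserts a convexity-type inequality for the fractional derivative $\partial_t(k*(u-u_0))$: if $H$ is convex, then
\[
H'(u(t))\,\partial_t(k*(u-u_0))(t)\;\geq\;\partial_t\big(k*[H(u)-H(u_0)]\big)(t).
\]
The plan is to deduce this directly from the fundamental identity \eqref{eqlemmaconvev} of Lemma~\ref{lemconvex}, which already decomposes $H'(u)\,\partial_t(k*u)$ into an exact derivative term plus two correction terms, and then to show that both correction terms have a definite sign under the added hypotheses.

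First I would apply Lemma~\ref{lemconvex} not to $u$ but to the shifted argument, or equivalently account for the constant $u_0$ by hand; the cleanest route is to write $\partial_t(k*(u-u_0)) = \partial_t(k*u) - u_0\,\partial_t(k*1)$ and note $\partial_t(k*1)(t)=k(t)$, so that the left-hand side becomes $H'(u(t))\partial_t(k*u)(t) - H'(u(t))u_0\,k(t)$. Substituting \eqref{eqlemmaconvev} for the first piece produces, after grouping the algebraic terms multiplying $k(t)$, the coefficient $H'(u)u - H(u) - H'(u)u_0$, which I would rewrite as $-\big(H(u) + H'(u)(u_0-u)\big)$. Similarly, I would absorb the constant into the exact-derivative term by using $\partial_t(k*H(u_0)) = H(u_0)k$, rewriting the target right-hand side as $\partial_t(k*[H(u)-H(u_0)]) = \partial_t(k*H(u)) - H(u_0)k(t)$.

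After this bookkeeping, the claimed inequality reduces to showing that
\[
\big(H(u_0)-H(u(t))-H'(u(t))(u_0-u(t))\big)k(t)
+\int_0^t \Delta(t,s)\,[-k'(s)]\,ds \;\geq\; 0,
\]
where $\Delta(t,s)=H(u(t-s))-H(u(t))-H'(u(t))[u(t-s)-u(t)]$. Here the two hypotheses do all the work: convexity of $H$ gives $H(a)-H(b)-H'(b)(a-b)\geq 0$ for all $a,b$, so the bracket multiplying $k(t)$ is non-negative and $\Delta(t,s)\geq 0$ for every $s$; and the monotonicity hypothesis $k$ non-increasing gives $-k'(s)\geq 0$, so the integrand is a product of two non-negative factors. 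Non-negativity of $k$ handles the leading term. Summing the two non-negative contributions yields the inequality.

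The main obstacle here is not the sign analysis, which is immediate once the identity is in place, but rather the integrability bookkeeping needed to invoke Lemma~\ref{lemconvex} legitimately and to guarantee that each term is finite almost everywhere: one must check that $H(u)$, $H'(u)u$, and $H'(u)(k'*u)$ lie in $L^1(0,T)$ as required, and that the manipulations with $\partial_t(k*1)=k$ and $\partial_t(k*H(u_0))=H(u_0)k$ are valid given only $k\in W^{1,1}(0,T)$. Since the statement is quoted from \cite{jukka2016decay} as a corollary, I expect the intended proof to simply record this reduction and cite the convexity and monotonicity signs, treating the integrability conditions as inherited directly from the hypotheses of Lemma~\ref{lemconvex}.
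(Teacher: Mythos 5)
Your proposal is correct and follows exactly the route the paper intends: the paper dispatches this lemma as an ``immediate consequence'' of the identity \eqref{eqlemmaconvev}, and your reduction --- absorbing the constant $u_0$ via $\partial_t(k*1)=k$, then observing that both correction terms $\bigl(H(u_0)-H(u(t))-H'(u(t))(u_0-u(t))\bigr)k(t)$ and $\int_0^t \Delta(t,s)[-k'(s)]\,ds$ are non-negative by convexity of $H$ together with $k\geq 0$ and $k$ non-increasing --- is precisely that argument. The integrability hypotheses are indeed inherited from Lemma~\ref{lemconvex}, so nothing further is needed.
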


\begin{lem}\rm \cite[Lemma 2.5]{scholtes2018existence}\label{lem:equality}
    Let $u\in L^{1}(0,T)$, $k$ a $\mathcal{PC}$-kernel, and for $\lambda>0$, let $k_{\lambda}$ be the kernel of the Yosida approximation of the operator defined  in \eqref{eqdefBlambda}.  Then we have for all $K>0$ and almost every $t>0$,
    \begin{equation}
        \begin{split}
            &\partial_{t}[k_{\lambda}*u ](t)T_{K}(u(t))\\
        &\quad=\partial_t[k_{\lambda}*\int^{u}_{0}T_K(\sigma)\,d\sigma ](t)+\bigg[T_{K}(u(t))u-\int^{u}_{0}T_{K}(\sigma)\,d\sigma\bigg]k_{\lambda}(t)\\
        &\qquad+\int^{t}_{0}\bigg[\int^{u(t-s)}_{u(t)}T_{K}(\sigma)\,d\sigma-T_{K}(u(t))(u(t-s)-u(t))\bigg][-k'_{\lambda}(s)]\,ds\\
        &\quad\geq \partial_t\left[k_{\lambda}*\int^{u}_{0}T_K(\sigma)\,d\sigma\right](t).
        \end{split}
    \end{equation}
\end{lem}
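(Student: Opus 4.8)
The plan is to read off both the identity and the inequality from the fundamental pointwise formula of Lemma~\ref{lemconvex}, applied to the regularized kernel $k_{\lambda}$ together with a suitable convex primitive of the truncation. First I would fix
\[
H(r):=\int_{0}^{r}T_{K}(\sigma)\,d\sigma,
\]
so that $H\in C^{1}(\mathbb{R})$ with $H'(r)=T_{K}(r)$ and $H(0)=0$; since $H'=T_{K}$ is nondecreasing, $H$ is convex. The whole proof then rests on recognizing that this $H$ turns the right-hand side of \eqref{eqlemmaconvev} into exactly the three displayed terms.

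Next I would verify the integrability hypotheses of Lemma~\ref{lemconvex} with $k$ replaced by $k_{\lambda}$. Because $|T_{K}(\sigma)|\le K$, we have $|H(u)|\le K|u|$ and $|H'(u)\,u|=|T_{K}(u)\,u|\le K|u|$, both of which lie in $L^{1}(0,T)$ since $u\in L^{1}(0,T)$. Moreover, the kernels $k_{\lambda}$ belong to $W^{1,1}(0,T)$ (as recalled above), hence $k_{\lambda}'\in L^{1}(0,T)$, and Young's convolution inequality gives $k_{\lambda}'*u\in L^{1}(0,T)$; consequently $H'(u)(k_{\lambda}'*u)=T_{K}(u)(k_{\lambda}'*u)$ is dominated by $K\,|k_{\lambda}'*u|\in L^{1}(0,T)$. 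Thus all hypotheses of Lemma~\ref{lemconvex} are met.

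Applying Lemma~\ref{lemconvex} with the pair $(k_{\lambda},H)$ now yields the asserted equality verbatim, once the difference $\int_{0}^{u(t-s)}T_{K}(\sigma)\,d\sigma-\int_{0}^{u(t)}T_{K}(\sigma)\,d\sigma$ in the memory term is rewritten as $\int_{u(t)}^{u(t-s)}T_{K}(\sigma)\,d\sigma$. For the final inequality I would show that the last two summands on the right are nonnegative, so that they may be discarded. The local term is nonnegative because $k_{\lambda}\ge 0$ and, by convexity together with $H(0)=0$, the tangent inequality $H(0)\ge H(u)+H'(u)(0-u)$ rearranges to $T_{K}(u)\,u-\int_{0}^{u}T_{K}(\sigma)\,d\sigma=H'(u)u-H(u)\ge 0$. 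The memory term is nonnegative because $k_{\lambda}$ is nonincreasing, so $-k_{\lambda}'\ge 0$, while the bracketed integrand $H(u(t-s))-H(u(t))-H'(u(t))\bigl(u(t-s)-u(t)\bigr)\ge 0$ is precisely the statement that the graph of the convex function $H$ lies above its tangent at $u(t)$.

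There is no substantial obstacle in this argument: the only points that demand care are the correct choice $H(r)=\int_{0}^{r}T_{K}(\sigma)\,d\sigma$ and the $L^{1}$-integrability checks needed to invoke Lemma~\ref{lemconvex}. Indeed, the inequality alone can also be obtained more quickly as the special case of Lemma~\ref{lemH} with $u_{0}=0$, the same convex $H$, and the nonnegative nonincreasing kernel $k_{\lambda}$; the value of the present formulation is the explicit identity, which exhibits the nonnegative correction terms that will be exploited later in the a priori estimates.
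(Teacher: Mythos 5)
Your proposal is correct and follows exactly the paper's route: the paper treats Lemma~\ref{lem:equality} as an immediate consequence of the fundamental identity \eqref{eqlemmaconvev} in Lemma~\ref{lemconvex}, applied with the kernel $k_{\lambda}\in W^{1,1}(0,T)$ and the convex primitive $H(r)=\int_{0}^{r}T_{K}(\sigma)\,d\sigma$, with the final inequality coming from discarding the two nonnegative correction terms (using $k_{\lambda}\ge 0$, $-k_{\lambda}'\ge 0$, and convexity). Your added integrability checks and the remark that the inequality alone also follows from Lemma~\ref{lemH} are sound details the paper leaves implicit.
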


\section{Weak solutions}
In this section,  we will introduce an approximate problem. For $m\in \N$ we define $f^m=T_{m}(f)$ and $u^{m}_{0}=T_{m}(u_0)$, then we know that $f^m$ and $u^{m}_{0}$ are non-negative, $(f^{m},u^{m}_0) \in L^{\infty}(\Omega_T)\times L^{\infty}(\Omega)$ and $(f^{m},u^{m}_{0}) \nearrow (f,u_0)$ strongly in $L^1(\Omega_T)\times L^1(\Omega)$ such that
\begin{equation}
    \|f^{m}\|_{L^1(\Omega_T)} \leq \|f\|_{L^1(\Omega_T)}, \quad \|u^{m}_{0}\|_{L^1(\Omega)}\leq \|u_0\|_{L^1(\Omega)}.
\end{equation}
We set
\[W(u_0, X^{s,2}_{0}(\Omega),L^2(\Omega)):=\{\omega\in L^{2}(0,T; X_{0}^{s,2}(\Omega)):k*(\omega-u_0)\in {}_{0}W^{1,2}(0,T;(X_{0}^{s,2}(\Omega))')\}.\]
Then we consider the approximate problem of \eqref{eq:main}.
\begin{lem}\label{lemma:appro} Let \rm(Hk) be satisfied. Then the following problem
\begin{equation}\label{eq:mainappro}
\left\{\begin{array}{cl}
\partial_t(k*(u^{m}-u^{m}_0))+(-\Delta)^{s}u^{m}=f^{m} &\quad \text{in }  \Omega_T \equiv \Omega\times (0,T),\\
u^{m}=0 &\quad \text{in } (\rn \setminus \Omega)\times (0,T),\\
u^{m}(x,0)=u^{m}_0(x) &\quad  \text{in }  \Omega
\end{array}\right.
\end{equation}
admits a unique  weak solution $u^m\in W(u_0, X^{s,2}_{0}(\Omega),L^2(\Omega))$ with $k*u^m\in C([0,T];L^2(\Omega))$ such that for any $\phi\in L^2(0,T;X^{s,2}_{0}(\Omega))\cap L^{\infty}(\Omega_T)$, $t_1\in (0,T)$,
\begin{equation}\label{eqweaksolution}
\begin{split}
   &\int^{t_1}_{0}\int_{\Omega} \phi \partial_{t}[k*(u^m-u^m_0)]\,dxdt+\frac{1}{2}\int^{t_1}_{0}\int_{\mathcal{D}_{\Omega}}(u^m(x)-u^m(y))(\phi(x)-\phi(y))\,dvdt\\
   &\quad=\int_{0}^{t_1}\int_{\Omega} f^m\phi\,dxdt
\end{split}
\end{equation}
holds.
\end{lem}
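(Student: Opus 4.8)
The plan is to establish existence and uniqueness for the approximate problem \eqref{eq:mainappro} by recognizing it as an abstract linear evolutionary integro-differential equation of the type treated by Zacher in \cite{Rico2009weak}, and then verifying that the fractional Laplacian fits into that abstract framework. Concretely, set $X = X^{s,2}_0(\Omega)$, a separable reflexive Banach space continuously and densely embedded into the pivot space $H = L^2(\Omega)$, giving the Gelfand triple $X \hookrightarrow H \hookrightarrow X'$. The first step is to observe that the fractional Laplacian $(-\Delta)^s$ induces, via the bilinear form
\[
\mathcal{A}(u,\phi) = \frac{1}{2}\int_{\mathcal{D}_{\Omega}}(u(x)-u(y))(\phi(x)-\phi(y))\,dv,
\]
a bounded, linear, symmetric operator $A\colon X \to X'$. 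Since $\mathcal{A}(u,u) = \|u\|_{X^{s,2}_0(\Omega)}^2$, the form is coercive, so $A$ is strongly monotone, bounded, hemicontinuous and coercive on $X$. Because $f^m \in L^\infty(\Omega_T) \subset L^2(0,T;X')$ and $u^m_0 \in L^\infty(\Omega) \subset H$, all the data hypotheses of the abstract existence result apply.

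Next I would invoke the abstract theorem of \cite{Rico2009weak} for the equation $\partial_t(k*(u-u_0)) + Au = f$ with $(k,l)\in\mathcal{PC}$. Under hypothesis (Hk) the kernel $k$ is in particular of type $\mathcal{PC}$, which is exactly the structural assumption on the memory kernel required there; moreover (Hk) gives $l \in L^p(0,T)$ with $p>1$, providing the additional integrability often needed to upgrade the weak solution to one with the continuity property $k*u^m \in C([0,T];L^2(\Omega))$. The abstract result then yields a unique function $u^m \in L^2(0,T;X)$ with $k*(u^m-u^m_0) \in {}_0W^{1,2}(0,T;X')$, i.e. $u^m \in W(u_0,X^{s,2}_0(\Omega),L^2(\Omega))$, satisfying the variational identity tested against $\phi \in L^2(0,T;X)$. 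Restricting the test functions to $L^2(0,T;X)\cap L^\infty(\Omega_T)$ and integrating the abstract identity over $(0,t_1)$ produces exactly \eqref{eqweaksolution}, where the duality pairing $\langle Au^m,\phi\rangle$ unfolds into the stated bilinear form over $\mathcal{D}_\Omega$ by the definition of $A$.

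The uniqueness follows from linearity together with the strong monotonicity of $A$: if $u^m_1, u^m_2$ are two weak solutions with the same data, their difference $w = u^m_1 - u^m_2$ satisfies $\partial_t(k*w) + Aw = 0$ with $w|_{t=0}=0$ in the appropriate sense. Testing with $w$ itself and applying the fundamental convexity inequality of Lemma~\ref{lemconvex} (or directly Lemma~\ref{lemH} with $H(r)=\tfrac{1}{2}r^2$) to control the memory term from below by $\partial_t(k*\tfrac{1}{2}|w|^2)$, combined with the coercivity $\mathcal{A}(w,w)\ge 0$, forces $w \equiv 0$. The main obstacle I anticipate is the careful verification that $(-\Delta)^s$ really does define a coercive, bounded linear operator on $X^{s,2}_0(\Omega)$ into its dual consistent with the Gelfand-triple structure required by \cite{Rico2009weak} — that is, checking the equivalence of norms on $X^{s,2}_0(\Omega)$ and confirming that the nonlocal Dirichlet condition on $\rn\setminus\Omega$ is correctly encoded so that the restriction to $\mathcal{D}_\Omega$ in the bilinear form matches the full-space pairing. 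The memory-kernel machinery is then essentially a black box, and the regularity statement $k*u^m\in C([0,T];L^2(\Omega))$ is the part most dependent on the finer $L^p$ integrability of $l$ afforded by (Hk).
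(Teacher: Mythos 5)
Your proposal is correct and follows essentially the same route as the paper: the paper's entire proof is a one-line appeal to \cite[Theorem 3.1]{Rico2009weak}, which is exactly the abstract result you invoke after setting up the Gelfand triple $X^{s,2}_0(\Omega)\hookrightarrow L^2(\Omega)\hookrightarrow (X^{s,2}_0(\Omega))'$ and checking that $(-\Delta)^s$ defines a bounded coercive linear operator there. Your additional verification of the hypotheses and the explicit uniqueness argument via Lemma~\ref{lemH} only make explicit what the paper leaves implicit in "arguing as the arguments of" Zacher's theorem.
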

\begin{proof}
Arguing as the arguments of~\cite[Theorem 3.1]{Rico2009weak}, we can find a unique weak solution $u^{m}\in W(u^m_0, X^{s,2}_{0}(\Omega),L^2(\Omega))$ for problem ~\eqref{eq:mainappro}.
%Then, similar as the proof as \cite[Theorem 3.1]{Rico2008boundedness}, we deduce that there exists a constant $C(m)$ such that for any weak solution  $u^{m}\in W(u_0, X^{s,2}_{0}(\Omega),L^2(\Omega))$ of problem~\eqref{eq:mainappro} there holds $u^m \leq C(m)$ a.e in $\Omega_T$. Thus, we obtain the desired result.
\end{proof}

\begin{lem}[\textbf{Comparison principle}]\label{lemmabijiao}
    Suppose that $u^m_{i} (i=1,2)$ are weak solutions to problem \eqref{eq:main} with $u^m_0=u^m_{0i}\in L^{\infty}(\Omega)$ and $f=f^m_{i}\in L^{\infty}(\Omega_T)$. Then, we have
\begin{eqnarray}\label{eqbijiao1}
     \int^{T}_{0}\int_{\Omega}(u^m_1-u^m_2)^{+}\,dxdt \leq T\int_{\Omega}(u^m_{01}-u^m_{02})^{+}\,dx+\|l\|_{L^1(0,T)}\int^{T}_{0}\int_{\Omega}(f^m_1-f^m_2)^{+}\,dx
    \end{eqnarray}
and
 \begin{eqnarray}\label{eqbijiao2}
 \|u^m_1-u^m_2\|_{L^1(\Omega_T)} \leq T \|u^m_{01}-u^m_{02}\|_{L^1(\Omega)}+\|l\|_{L^1(0,T)}\|f^m_1-f^m_2\|_{L^1(\Omega_T)}.
 \end{eqnarray}
\end{lem}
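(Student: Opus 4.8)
The plan is to run an $L^1$-contraction (Kato-type) argument: subtract the two weak formulations, test with a smooth approximation of the positive-part sign function of $u_1^m-u_2^m$, exploit the monotonicity of $(-\Delta)^s$ together with the convexity inequality of Lemma~\ref{lemH} for the non-local-in-time term, and finally convolve the resulting scalar inequality in $t$ with the kernel $l$ to eliminate the memory term via $k*l=1$. First I would set $w=u_1^m-u_2^m$, $w_0=u_{01}^m-u_{02}^m$ and $g=f_1^m-f_2^m$, and subtract the identities \eqref{eqweaksolution} written for $u_1^m$ and $u_2^m$; this is legitimate since both $\partial_t(k*\cdot)$ and $(-\Delta)^s$ are linear. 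I would then fix a convex approximation $H_\epsilon\in C^1(\mathbb{R})$ of $r\mapsto r^+$ with $0\le H_\epsilon'\le 1$ nondecreasing, $H_\epsilon'\equiv 0$ on $(-\infty,0]$, $H_\epsilon(0)=0$, and $H_\epsilon\nearrow r^+$, $H_\epsilon'\to\chi_{\{r>0\}}$ as $\epsilon\to 0$. Since $H_\epsilon'$ is Lipschitz and vanishes at $0$, the function $\phi=H_\epsilon'(w)$ lies in $X^{s,2}_0(\Omega)\cap L^\infty$ for a.e.\ $t$ and is an admissible test function.

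With this choice, the three terms behave as follows. The diffusion integrand $(w(x)-w(y))\big(H_\epsilon'(w(x))-H_\epsilon'(w(y))\big)$ is nonnegative by monotonicity of $H_\epsilon'$, so that contribution can be discarded; the right-hand side satisfies $gH_\epsilon'(w)\le g^+$ because $0\le H_\epsilon'\le 1$. The crux is the memory term $\int_0^{t_1}\!\int_\Omega H_\epsilon'(w)\,\partial_t[k*(w-w_0)]\,dx\,dt$. Since Lemma~\ref{lemH} (via Lemma~\ref{lemconvex}) requires $k\in W^{1,1}(0,T)$, which fails for singular $\mathcal{PC}$-kernels such as $g_{1-\alpha}$, I would first replace $k$ by its Yosida kernel $k_\lambda\in W^{1,1}(0,T)$, apply Lemma~\ref{lemH} pointwise in $x$ to get $H_\epsilon'(w)\,\partial_t[k_\lambda*(w-w_0)]\ge \partial_t\big(k_\lambda*[H_\epsilon(w)-H_\epsilon(w_0)]\big)$, integrate over $\Omega\times(0,t_1)$, and pass to the limit $\lambda\to 0$ using $k_\lambda\to k$ in $L^1(0,T)$, $k_\lambda'\to k'$, and the domination furnished by (K1)--(K2). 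After integrating the time derivative, using that the convolution vanishes at $t=0$ together with $w(\cdot,0)=w_0$, and finally letting $\epsilon\to 0$ (monotone convergence on the left, dominated convergence on the right), I would arrive at
\begin{equation*}
\int_\Omega\big(k*(u_1^m-u_2^m)^+\big)(x,t_1)\,dx\le (1*k)(t_1)\int_\Omega w_0^+\,dx+\int_0^{t_1}\!\int_\Omega g^+\,dx\,dt
\end{equation*}
for a.e.\ $t_1\in(0,T)$.

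The last step removes the kernel. Writing $\psi(t)=\int_\Omega (u_1^m-u_2^m)^+(\cdot,t)\,dx$, the bound above reads $(k*\psi)(t_1)\le (1*k)(t_1)\int_\Omega w_0^+\,dx+R(t_1)$ with $R(t)=\int_0^t\!\int_\Omega g^+\,dx\,d\tau$ nondecreasing. Convolving this scalar inequality with the nonnegative kernel $l$ and using $l*k=1$ turns the left-hand side into $(1*\psi)(t_1)$ and the first right-hand term into $\big(\int_\Omega w_0^+\,dx\big)\,t_1$ (since $l*1*k=1*1$); bounding $(l*R)(T)\le\|l\|_{L^1(0,T)}R(T)$ and evaluating at $t_1=T$ yields \eqref{eqbijiao1}. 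Interchanging the roles of $u_1^m$ and $u_2^m$ gives the companion estimate for $(u_2^m-u_1^m)^+$, and adding the two, using $|a|=a^++(-a)^+$ for the datum, the source, and the solution, produces \eqref{eqbijiao2}.

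I expect the main obstacle to be the rigorous lower bound for the non-local time term: the convexity identity is only available for absolutely continuous kernels, so the whole argument hinges on introducing the Yosida kernels $k_\lambda$, applying Lemma~\ref{lemH} at that level, and controlling the passage $\lambda\to 0$ through (K1)--(K2)—while simultaneously keeping track of the fact that $\partial_t[k*(w-w_0)]$ is only a dual-space–valued object, so the pointwise application of the identity must be justified through the regularity $k*(w-w_0)\in{}_0W^{1,2}(0,T;(X^{s,2}_0(\Omega))')$ of weak solutions.
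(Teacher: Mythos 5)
Your proposal is correct and follows essentially the same route as the paper's proof: subtract the two weak formulations, test with a $C^1$ convex approximation of the positive part, discard the nonnegative fractional-diffusion term by monotonicity, bound the memory term from below by passing to the Yosida kernels $k_\lambda$ and applying Lemma~\ref{lemH}, control the error term $\int\!\!\int H_\epsilon'(w)\,\partial_t[(k_\lambda-k)*(w-w_0)]\,dx\,dt$ via the regularity $k*(w-w_0)\in{}_{0}W^{1,2}(0,T;(X^{s,2}_{0}(\Omega))')$, and finally convolve the resulting scalar inequality with $l$ and use $k*l=1$. The only cosmetic differences are that the paper takes the explicit choice $H_\epsilon(y)=\sqrt{y^2+\epsilon^2}-\epsilon$ composed with the positive part, does not need (K1)--(K2) at all in this lemma (the dual-space Yosida convergence suffices, so your appeal to $k_\lambda'\to k'$ and the domination conditions is superfluous here), and obtains \eqref{eqbijiao2} by re-running the argument with the test function $H_\epsilon'(u^m_1-u^m_2)$ rather than by symmetrizing and adding the two one-sided estimates as you do--both of which are valid.
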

\begin{proof} We shall give the proof of \eqref{eqbijiao1} and the proof of \eqref{eqbijiao2} is similar to \eqref{eqbijiao1}.

Set
 \[v=u^m_1-u^m_2,\quad v_0=u^m_{01}-u^m_{02}\quad and \quad F=f^m_1-f^m_2.\]
For $\epsilon>0$, we define a convex function
     \begin{equation}\label{eqH}
         H_{\epsilon}(y)=\sqrt{y^2+\epsilon^2}-\epsilon,\quad y\in \R.
     \end{equation}
After computation, we can get
\[H'_{\epsilon}(y)=\frac{y}{\sqrt{y^2+\epsilon^2}},\quad H''_{\epsilon}(y)=\frac{\epsilon^2}{(y^2+\epsilon^2)^{\frac{3}{2}}},\quad y\in \R.\]

Taking $H'_{\epsilon}(v^+)\in  L^2(0,T;X^{s,2}_{0}(\Omega))\cap L^{\infty}(\Omega_T)$ as a test function in the weak formulation of the problem for both $u^m_1$ and $u^m_2$, we deduce that
\begin{equation*}
\begin{split}
 &\int^{t_1}_{0}\int_{\Omega}H'_{\epsilon}(v^+) \partial_{t}[k*(v-v_0)]\,dxdt+\frac{1}{2}\int^{t_1}_{0}\int_{\mathcal{D}_{\Omega}}(v(x)-v(y))(H'_{\epsilon}(v^+(x))-H'_{\epsilon}(v^+(y)))\,dvdt\\
   &\quad=\int_{0}^{t_1}\int_{\Omega} FH'_{\epsilon}(v^+)\,dxdt.
\end{split}
\end{equation*}
It is not difficult to see that
\begin{equation*}
\begin{split}
  &\int^{t_1}_{0}\int_{\mathcal{D}_{\Omega}}(v(x)-v(y))(H'_{\epsilon}(v^+(x))-H'_{\epsilon}(v^+(y)))\,dvdt\\
  &\quad=\int^{t_1}_{0}\int_{\mathcal{D}_{\Omega}}(v(x)-v(y))(v^+(x)-v^+(y))H_{\epsilon}''(\xi)\,dvdt \geq 0,
\end{split}
\end{equation*}
where $\xi$ is between $v^+(x)$ and $v^+(y)$.
Combining with the fact that $0<H'_{\epsilon}(v^+)\leq 1$,  we deduce that
\begin{equation}
    \int^{t_1}_{0}\int_{\Omega}H'_{\epsilon}(v^+) \partial_{t}[k*(v-v_0)]\,dxdt  \leq   \int_{0}^{t}\int_{\Omega} FH'_{\epsilon}(v^+)\,dxdt\leq \int^{t_1}_{0}\int_{\Omega}F^+\,dxdt.
\end{equation}
For $\lambda>0$, let $k_{\lambda}$ be the kernel associated to the Yosida approximation of the operator
\begin{eqnarray*}
B_1 \omega&:=&\partial_t(k*\omega)\\
    D(B_1)&:=&\{\omega\in L^2(0,T; (X_{0}^{s,2}(\Omega))'): k*\omega \in {}_{0}W^{1,2}(0,T;(X_{0}^{s,2}(\Omega))')\}.
\end{eqnarray*}
Note that
\begin{equation*}
    \begin{split}
        \int^{t_1}_{0}\int_{\Omega}H'_{\epsilon}(v^+) \partial_{t}[k*(v-v_0)]\,dxdt &=\int^{t_1}_{0}\int_{\Omega}H'_{\epsilon}(v^+) \partial_{t}[k_{\lambda}*(v-v_0)]\,dxdt \\
        &\quad-\int^{t_1}_{0}\int_{\Omega}H'_{\epsilon}(v^+) \partial_{t}[(k_{\lambda}-k)*(v-v_0)]\,dxdt.
    \end{split}
\end{equation*}
 Since $H'_{\epsilon}(v^+)$ is a convex function, it follows from Lemma~\ref{lemH} that
\begin{equation*}
\begin{split}
     \int^{t_1}_{0}\int_{\Omega}H'_{\epsilon}(v^+) \partial_{t}[k_{\lambda}*(v-v_0)]\,dxdt &\geq \int^{t_1}_{0}\int_{\Omega} \partial_{t}[k_{\lambda}*(H_{\epsilon}(v^+)-H_{\epsilon}(v^+_0)]\,dxdt\\
     &=\int_{\Omega}k_{\lambda}*(H_{\epsilon}(v^+)-H_{\epsilon}(v^+_0))(t_1,x)\,dx
\end{split}
\end{equation*}
for $t_1\in (0,T)$. Thus, we infer that
\begin{equation*}
\int_{\Omega}k_{\lambda}*(H_{\epsilon}(v^+)-H_{\epsilon}(v^+_0))(t_1,x)\,dx \leq \int^{t_1}_{0}\int_{\Omega}F^+\,dxdt+ \int^{t_1}_{0}\int_{\Omega}H'_{\epsilon}(v^+) \partial_{t}[(k_{\lambda}-k)*(v-v_0)]\,dxdt.
\end{equation*}
According to the fact that $k_{\lambda} \to k $ in $L^1(0,T)$ as $\lambda\to 0$, it follows from the Young's inequality that
\begin{equation*}
    \int_{\Omega}k_{\lambda}*(H_{\epsilon}(v^+)-H_{\epsilon}(v^+_0))(t_1,x)\,dx \to \int_{\Omega}k*(H_{\epsilon}(u^+)-H_{\epsilon}(v^+_0))(t_1,x)\,dx
\end{equation*}
in $L^1(0,T)$, and selecting a subsequence if necessary, a.e. on $(0,T)$. In addition, we notice that $k*(v-v_0)\in {}_{0}W^{1,2}(0,T;(X_{0}^{s,2}(\Omega))')$. Thus, we deduce that
\begin{equation*}
    \partial_{t}(k_{\lambda}*(v-v_0))\to  \partial_{t}(k*(v-v_0))\quad \text{in}~L^2(0,T;  (X_{0}^{s,2}(\Omega))'),
\end{equation*}
which implies
\[\lim_{\lambda\to 0}\int^{t_1}_{0}\int_{\Omega}H'_{\epsilon}(v^+) \partial_{t}[(k_{\lambda}-k)*(v-v_0)]\,dxdt=0.\]
Then, we conclude that
\begin{equation}\label{bijiao}
   \int_{\Omega}k*(H_{\epsilon}(v^+)-H_{\epsilon}(v^+_0))(t_1
   ,x)\,dx\leq\int^{t_1}_{0}\int_{\Omega}F^+\,dxdt,~\text{for a.e.}~t_1\in (0,T).
\end{equation}
Convolving \eqref{bijiao} with the kernel $l$ and evaluating at $t_1=T$ gives
\begin{equation}
    \int^{T}_{0}\int_{\Omega}H_{\epsilon}(v^+)\,dxdt \leq T\int_{\Omega}H_{\epsilon}(v^+_0)\,dx+ \|l\|_{L^1(0,T)}\int^{T}_{0}\int_{\Omega}F^+\,dxdt.
\end{equation}
Since $H_{\epsilon}(y^+)\to y^+$ as $\epsilon\to 0$ and $|H'_{\epsilon}(y^+)|\leq 1$, it follows from the Lebesgue's dominated convergence theorem that
\begin{equation*}
     \int^{T}_{0}\int_{\Omega}v^+\,dxdt \leq T\int_{\Omega}v^+_0\,dx+ \|l\|_{L^1(0,T)}\int^{T}_{0}\int_{\Omega}F^+\,dxdt.
\end{equation*}
This yields the assertion.
\end{proof}

\section{The proof of main results}
In this section we provide proofs of the main goals--existence and uniqueness of entropy solutions for problems \eqref{eq:main}. Some of the reasoning is based on the ideas developed in \cite{schmitz2023entropy,scholtes2018existence,Teng2019renormalized,petra2021bounded,Rico2009weak,zhang2010renormalized}.  According to  Lemma~\ref{lemma:appro} and Lemma~\ref{lemmabijiao}, we can find  a unique non-negative weak solution $u^m\in
W(u_0, X^{s,2}_{0}(\Omega),L^2(\Omega))$ for the approximate problem~\eqref{eq:mainappro}.   Our aim is to prove that a subsequence of these  solutions $\{u^{m}\}$  converges to a measurable function $u$, which is an entropy solution of problem \eqref{eq:main}. The uniqueness of entropy solution is obtained by a comparison principle. Although some of the arguments are not new, we present a self-contained proof for the sake of clarity and readability. We will divide the proof into several steps.

\medskip
\noindent \textit{Proof of Theorem~\ref{theorem:main}.}
\textbf{Step 1.} Prove the convergence of $u^{m}$ in $L^1(\Omega_T)$ and find its subsequence which is almost everywhere convergent in $\Omega_T$.

It follows from Lemma~\ref{lemmabijiao} that
\begin{equation*}
    \begin{split}
        \sup_{m\in \mathbb{N}}\|u^m\|_{L^1(\Omega_T)} &\leq \sup_{m\in\mathbb{N}}(T\|u^m_{0}\|_{L^1(\Omega)}+\|l\|_{L^1(0,T)}\|f^m\|_{L^1(\Omega_T)})\\
        &\leq T\|u_{0}\|_{L^1(\Omega)}+\|l\|_{L^1(0,T)}\|f\|_{L^1(\Omega_T)}.
    \end{split}
\end{equation*}
Moreover, according to \eqref{eqbijiao1}, we know that
\[u^m\leq u^{m+1} \quad \text{for all}~m\in\N,\]
which implies that  $u^m$ is a non-negative increasing sequence. Therefore, we know that
there exists an element $u$ such that
\[ u^{m} \to u\quad\text{a.e. in}~\Omega_T~\text{for}~m\to\infty.\]
In particular, we have $u^m\leq u$ a.e. in $\Omega_T$. Using the Lebesgue's dominated theorem, we deduce that
\begin{equation*}
     u^{m} \to u\quad\text{in}~L^1(\Omega_T)~\text{for}~m\to\infty.
\end{equation*}

\textbf{Step 2.} Prove $T_{K}(u^{m})$ strongly converges to $T_K(u)$ in $L^2(0,T; X^{s,2}_{0}(\Omega))$ for every $K>0$.

Choosing $T_K(u^{m})$ as a test function in \eqref{eqweaksolution}, we have
\begin{eqnarray*}
    &&\int^{t_1}_{0}\int_{\Omega}T_K(u^{m})\partial_t[k*(u^{m}-u^{m}_0)]\,dxdt\\
    &&\quad +\frac{1}{2}\int^{t_1}_{0}\int_{\mathcal{D}_{\Omega}}(u^{m}(x,t)-u^{m}(y,t))\bigg(T_{K}(u^{m}(x,t))-T_{K}(u^{m}(y,t))\bigg)\,dvdt\\
    &&\quad=\int^{t_1}_{0}\int_{\Omega}f^{m}T_K(u^{m})\,dxdt,
\end{eqnarray*}
for all $t_1\in (0,T)$.

Due to  the  following elementary algebraic inequality
\begin{eqnarray*}
    (a-b)(T_K(a)-T_K(b))\geq |T_K(a)-T_K(b)|^2, \quad a,b\in \R,
\end{eqnarray*}
we deduce that
\begin{eqnarray}
    &&\int^{t_1}_{0}\int_{\Omega}T_K(u^{m})\partial_t[k_{\lambda}*(u^{m}-u^{m}_0)]\,dxdt\\
    &&\quad +\frac{1}{2}\int^{t_1}_{0}\int_{\mathcal{D}_{\Omega}}\bigg|T_{K}(u^{m}(x,t))-T_{K}(u^{m}(y,t))\bigg|^2\,dvdt\nonumber\\
    &&\quad\leq \int^{t_1}_{0}\int_{\Omega}f^{m}T_K(u^{m})\,dxdt+\int^{t_1}_{0}\int_{\Omega}T_K(u^{m})\partial_t[(k_{\lambda}-k)*(u^{m}-u^{m}_0)]\,dxdt.\nonumber
\end{eqnarray}
Combining with Lemma~\ref{lem:equality}, we obtain
\begin{equation}\label{eqstep2j}
\begin{split}
&\int_{\Omega}\left[k_\lambda *\int^{u^{m}}_0 T_K(\sigma)\,d\sigma\right](t_1)\,dx+\int^{t_1}_{0}\int_{\Omega}\bigg[T_{K}(u^m(t))u^m-\int^{u^m}_{0}T_{K}(\sigma)\,d\sigma\bigg]k_{\lambda}(t)\,dxdt\\
&\quad+\int^{t_1}_{0}\int_{\Omega}\int^{t}_{0}\bigg[\int^{u^m(t-s)}_{u^m(t)}T_{K}(\sigma)\,d\sigma-T_{K}(u^m(t))(u^m(t-s)-u^m(t))\bigg][-k'_{\lambda}(s)]\,dsdxdt\\
&\quad +\frac{1}{2}\int^{t_1}_{0}\int_{\mathcal{D}_{\Omega}}\bigg|T_{K}(u^{m}(x,t))-T_{K}(u^{m}(y,t))\bigg|^2\,dvdt\\
    &\quad\leq \int^{t_1}_{0}\int_{\Omega}f^{m}T_K(u^{m})\,dxdt+\int^{t_1}_{0}\int_{\Omega}T_K(u^{m})\partial_t[(k_{\lambda}-k)*(u^{m}-u^{m}_0)]\,dxdt\\
    &\qquad+\int^{t_1}_{0}\int_{\Omega}T_K(u^{m})k_{\lambda}(t)u^{m}_{0}\,dxdt.
    \end{split}
\end{equation}
Since $(u^{m}-u^{m}_0)\in D(B_1)$ the second term in the right-hand side converges to zero as $\lambda\to 0$. Recalling the fact that $k_{\lambda}\to k$ in $L^{1}(0,T)$ as $\lambda \to 0$,  we obtain
\begin{eqnarray*}
    &&\int_{\Omega}\left[k*\int^{u^{m}}_0 T_K(\sigma)\,d\sigma\right](t_1)\,dx +\frac{1}{2}\int^{t_1}_{0}\int_{\mathcal{D}_{\Omega}}\bigg|T_{K}(u^{m}(x,t))-T_{K}(u^{m}(y,t))\bigg|^2\,dvdt\nonumber\\
    &&\quad \leq \int^{t_1}_{0}\int_{\Omega}f^{m}T_K(u^{m})\,dxdt+\int^{t_1}_{0}\int_{\Omega}T_K(u^{m})k(t)u^{m}_{0}\,dxdt,\nonumber
\end{eqnarray*}
which implies that
\begin{eqnarray*}
&&\frac{1}{2}\int^{t_1}_{0}\int_{\mathcal{D}_{\Omega}} \bigg|T_{K}(u^{m}(x,t))-T_{K}(u^{m}(y,t))\bigg|^2\,dvdt\\
&&\quad\leq\int^{t_1}_{0}\int_{\Omega}f^{m}T_K(u^{m})\,dxdt+K\int^{t_1}_{0}\int_{\Omega}k(t)u^{m}_{0}\,dxdt\nonumber  \\
  &&\quad\leq K\|f\|_{L^1(\Omega_T)}+K\|k\|_{L^1(0,T)}\|u_{0}\|_{L^1(\Omega)}\leq C,
\end{eqnarray*}
since $k$ is non-negative. Then, up to a subsequence, we deduce that
\begin{eqnarray*}
 T_K(u^{m})&\overset{m\to\infty}\rightharpoonup T_K(u) \quad \text{weakly in} ~L^2(0,T;X^{s,2}_{0}(\Omega)).
\end{eqnarray*}

Next, we aim to prove  that
\begin{equation*}
    \begin{split}
&\limsup_{m\to\infty}\int^{t_1}_{0}\int_{\mathcal{D}_{\Omega}}\bigg|T_{K}(u^{m}(x))-T_{K}(u^{m}(y))\bigg|^2\,dvdt\\        &\quad\leq\int^{t_1}_{0}\int_{\mathcal{D}_{\Omega}}\bigg|T_{K}(u(x))-T_{K}(u (y))\bigg|^2\,dvdt.
    \end{split}
\end{equation*}
For $l>0$ we denote by $h_l$ the function defined by
\[h_l(u)=\min\{(l+1-|u|)^+,1\}.\]
We take  $\bigg(T_{K}(u^{m})-h_l(u^{m})(T_K(u))_{\mu}\bigg)$ as a test function in equation \eqref{eqweaksolution}, where $(T_{K}(u))_{\mu}$ is defined by \eqref{eqmu}. Thus, we obtain
\begin{equation}\label{eq:step2main}
    J^{\lambda,m,\mu,l}_{1}+ J^{\lambda,m,\mu,l}_{2}= J^{\lambda,m,\mu,l}_{3}+ J^{\lambda,m,\mu,l}_{4},
\end{equation}
where
\begin{equation*}
    \begin{split}
&J^{\lambda,m,\mu,l}_{1}=\int^{t_1}_{0}\int_{\Omega}\bigg(T_{K}(u^{m})-h_l(u^{m})(T_K(u))_{\mu}\bigg)\partial_t[k_{\lambda}*(u^{m}-u^{m}_0)]\,dxdt,\\
    &J^{\lambda,m,\mu,l}_{2}=\int^{t_1}_{0}\langle (-\Delta)^{s}_{2}u^m, T_{K}(u^{m})-h_l(u^{m})(T_K(u))_{\mu}\rangle\, dt,\\
&J^{\lambda,m,\mu,l}_{3}=\int^{t_1}_{0}\int_{\Omega}f^{m}\bigg(T_{K}(u^{m})-h_l(u^{m})(T_K(u))_{\mu}\bigg)\,dxdt,\\
    &J^{\lambda,m,\mu,l}_{4}=\int^{t_1}_{0}\int_{\Omega}(T_{K}(u^{m})-h_l(u^{m})(T_K(u))_{\mu})\partial_t[(k_{\lambda}-k)*(u^{m}-u^{m}_0)]\,dxdt.
    \end{split}
\end{equation*}

We are going to pass the limit with $\lambda\to 0$, $m\to\infty$, then $\mu\to 0$, and finally with $l\to\infty$. Roughly speaking, we show that the limits of $J^{\lambda,m,\mu,l}_{3}$ and $J^{\lambda,m,\mu,l}_{4}$ are zero, and the limit of $J^{\lambda,m,\mu,l}_{1}$ is non-negative. Then the limit of $J^{\lambda,m,\mu,l}_{2}$ is nonpositive.

\textbf{Limit of $J^{\lambda,m,\mu,l}_{3}$.}  To deal with the limit with $m\to\infty$, we apply the Lebesgue's dominated convergence theorem due to the continuity of the integrand and the fact that $u^{m}\to u$ a.e. in $\Omega_{T}$. Moreover, we know that  $(T_{K}(u))_{\mu}$ strongly convergence to $T_{K}(u)$  in $L^2(0,T; X_{0}^{s,2}(\Omega))$ and a.e. in $\Omega_T$ as $\mu\to \infty$, and $h_{l}(u)\to 1$ a.e. in $\Omega_T$ as $l \to \infty$. Therefore, it is obvious that
\begin{equation}
    \begin{split}
     \lim_{l\to \infty}\lim_{\mu\to 0}\lim_{m\to\infty}\int^{t_1}_{0}\int_{\Omega}f^{m}\bigg(T_{K}(u^{m})-h_l(u^{m})(T_K(u))_{\mu}\bigg)\,dxdt=0.
    \end{split}
\end{equation}

\textbf{Limit of $J^{\lambda,m,\mu,l}_{4}$.} As an immediate consequence of $(u^{m}-u^{m}_0)\in D(B_1)$, we obtain that
\begin{equation}
    \begin{split}
       \lim_{\lambda\to 0} \int^{t_1}_{0}\int_{\Omega}(T_{K}(u^{m})-h_l(u^{m})(T_K(u))_{\mu})\partial_t[(k_{\lambda}-k)*(u^{m}-u^{m}_0)]\,dxdt=0.
    \end{split}
\end{equation}

\textbf{Limit of $J^{\lambda,m,\mu,l}_{1}$.}  We are aim to prove that
\begin{equation}\label{eq:budengshi}
\begin{split}
      &\liminf_{l\to \infty}\liminf_{\mu\to 0}\liminf_{m\to\infty}\liminf_{\lambda\to 0}\int^{t_1}_{0}\int_{\Omega}\partial_{t}[k_{\lambda}*(u^{m}-u^{m}_{0})]\\
&\qquad\qquad\qquad\times \bigg(T_{K}(u^{m})-h_l(u^{m})(T_K(u))_{\mu}\bigg)\,dxdt\geq 0
\end{split}
\end{equation}
for almost every $t_1 \in (0,T)$. Let us consider a decomposition
\begin{equation}\label{eqJI}
    \begin{split}
        &\int^{t_1}_{0}\int_{\Omega}\partial_{t}[k_{\lambda}*(u^{m}-u^{m}_{0})]\times \bigg(T_{K}(u^{m})-h_l(u^{m})(T_K(u))_{\mu}\bigg)\,dxdt\\
        &\quad=\int^{t_1}_{0}\int_{\Omega}\partial_{t}[k_{\lambda}*u^{m}]\times T_{K}(u^{m})\,dxdt-\int^{t_1}_{0}\int_{\Omega}\partial_{t}[k_{\lambda}*u^{m}]\times  h_l(u^{m})(T_K(u))_{\mu}\,dxdt\\
        &\qquad-\int^{t_1}_{0}\int_{\Omega}k_{\lambda}u^{m}_{0}\times \bigg(T_{K}(u^{m})-h_l(u^{m})(T_K(u))_{\mu}\bigg)\,dxdt\\
        &\quad=: J^{\lambda,m,\mu,l}_{11}+ J^{\lambda,m,\mu,l}_{12}+J^{\lambda,m,\mu,l}_{13}.
    \end{split}
\end{equation}
Next, we shall give the estimates of  $J^{\lambda,m,\mu,l}_{1i} (i=1,2,3)$ one by one.

\emph{Limit of $J^{\lambda,m,\mu,l}_{13}$.}
As $k_\lambda\xrightarrow{\lambda\to0} k $  in $L^1(0,T)$ and  $(T_{K}(u^{m})-h_l(u^{m})(T_K(u))_{\mu})$ is uniformly bounded by $2K$, combining with the fact that $u^{m}\xrightarrow{m\to\infty} u$ a.e. in $\Omega_T$, we conclude from Lebesgue's dominated  theorem that
\begin{eqnarray*}
&& \int^{t_1}_{0}\int_{\Omega}k_{\lambda}u^{m}_{0}[T_{K}(u^{m})-h_l(u^{m})(T_K(u))_{\mu}]\,dxdt \\ && \quad \xrightarrow[m\to \infty]{\lambda\to 0}\int^{t_1}_{0}\int_{\Omega}k(t)u_0 [T_{K}(u)-h_l(u)(T_K(u))_{\mu}]\,dxdt.
\end{eqnarray*}
Since $(T_K(u))_{\mu}\xrightarrow{\mu\to 0} T_K(u)$ a.e in $\Omega_T$, and $h_l(u)\xrightarrow{l\to\infty} 1 $ for every $u\in \mathbb{R}$,  using the  Lebesgue's dominated  theorem again, we have
\begin{eqnarray*}
    \lim_{l\to\infty}\lim_{\mu\to 0}\int^{t_1}_{0}\int_{\Omega}k(t)u_0 [T_{K}(u)-h_l(u)(T_K(u))_{\mu}]\,dxdt=0.
\end{eqnarray*}
Thus, we conclude that
\begin{equation}
    \begin{split}
     &\liminf_{l\to \infty}\liminf_{\mu\to 0}\liminf_{m\to\infty}\liminf_{\lambda\to 0} J^{\lambda,m,\mu,l}_{13}\\
      &\quad=\liminf_{l\to \infty}\liminf_{\mu\to 0}\liminf_{m\to\infty}\liminf_{\lambda\to 0} -\int^{t_1}_{0}\int_{\Omega}k_{\lambda}u^{m}_{0}\times \bigg(T_{K}(u^{m})-h_l(u^{m})(T_K(u))_{\mu}\bigg)\,dxdt\\
      &\quad=0.
    \end{split}
\end{equation}

\emph{Limit of $J^{\lambda,m,\mu,l}_{11}$.}  According to Lemma~\ref{lem:equality}, we obtain
\begin{equation*}\label{eqdelete313}
    \begin{split}
        \int^{t_1}_{0}\int_{\Omega}\partial_{t}(k_{\lambda}*u^{m})T_{K}(u^{m})\,dxdt&=\int_{\Omega}\bigg[k_{\lambda}*\int^{u^{m}}_{0}T_K(\sigma)\,d\sigma \bigg](t_1)\,dx\\
         &\qquad+\int^{t_1}_{0}\int_{\Omega}\bigg[T_{K}(u^{m}(t))u^{m}-\int^{u^{m}}_{0}T_{K}(\sigma)\,d\sigma\bigg]k_{\lambda}(t)\,dxdt\\
&\qquad+\int^{t_1}_{0}\int_{\Omega}\int^{t}_{0}\bigg[\int^{u^{m}(t-s)}_{u^{m}(t)}T_{K}(\sigma)\,d\sigma\\
        &\qquad\qquad-T_{K}(u^{m}(t))(u^{m}(t-s)-u^{m}(t))\bigg][k'_{\lambda}(s)]\,dsdxdt.
\end{split}
\end{equation*}
Since $u^{m}\to u$ a.e. in $\Omega_T$, it follows that $\int^{u^{m}}_{0}T_{K}(\sigma)\,d\sigma \to \int^{u}_{0}T_{K}(\sigma)\,d\sigma$ a.e. in $\Omega_T$. Combining with the fact that  $\left|\int^{u^{m}}_{0}T_{K}(\sigma)\,d\sigma\right|\leq K|u^{m}|\leq K|u|$,  we  deduce  by the  Lebesgue's dominated theorem that \begin{equation}\label{eqdelete38}
    \int^{u^{m}}_{0}T_{K}(\sigma)\,d\sigma \xrightarrow{m\to\infty} \int^{u}_{0}T_{K}(\sigma)\,d\sigma \quad \text{in }L^1(\Omega_T)
\end{equation}
and
\begin{equation*}
    \int_{\Omega}\int^{u^{m}}_{0}T_{K}(\sigma)\,d\sigma \,dx\xrightarrow{m\to\infty} \int_{\Omega}\int^{u}_{0}T_{K}(\sigma)\,d\sigma dx\quad \text{in } L^1(0,T).
\end{equation*}
Then, we can find an a.e. convergent subsequence, still denoted the same way, such that
\[ \int_{\Omega}\int^{u^{m}}_{0}T_{K}(\sigma)\,d\sigma dx  \xrightarrow{m\to\infty} \int_{\Omega}\int^{u}_{0}T_{K}(\sigma)\,d\sigma dx\quad \text{a.e. in } (0,T).\]
As $k_{\lambda}\to k$ in $L^{1}(0,T)$,  we  conclude by Young's inequality that
\begin{equation*}
    \int_{\Omega}\left[k_{\lambda}*\int^{u^{m}}_{0}T_{K}(\sigma)\,d\sigma\right](\cdot)\, dx\xrightarrow[m\to \infty]{\lambda\to 0} \int_{\Omega}\left[k*\int^{u}_{0}T_{K}(\sigma)\,d\sigma\right](\cdot)\,dx
\end{equation*}
in $L^1(0,T)$ and a.e. in $(0,T)$ for  subsequence.

According to
\begin{equation*}
    \partial_{t}k_{\lambda}*(u^m-u^m_0) \to   \partial_{t}k*(u^m-u^m_0) \quad\text{in } L^2(0,T;  (X_{0}^{s,2}(\Omega))')
\end{equation*}
as $\lambda\to 0$,  we  deduce  that
\begin{equation*}
\lim_{\lambda\to\infty}\int^{t_1}_{0}\int_{\Omega}T_{K}(u^m)\partial_{t}(k_{\lambda}-k)*(u^m-u^m_0) \,dxdt=0,
\end{equation*}
which implies that there exists a constant $C$ which is independent of $\lambda$ such that
\begin{equation*}
   \int^{t_1}_{0}\int_{\Omega}T_{K}(u^m)\partial_{t}(k_{\lambda}-k)*(u^m-u^m_0) \,dxdt \leq C.
\end{equation*}
Thus,  combining with the fact that the kernel $k$ satisfies $\rm(K1)$ and $\rm(K2)$, we deduce from \eqref{eqstep2j}  that for all $t_1\in (0,T)$,
\begin{eqnarray*}
    &&\int^{t_1}_{0}\int_{\Omega}\bigg[T_{K}(u^{m}(t))u^{m}-\int^{u^{m}}_{0}T_{K}(\sigma)\,d\sigma\bigg]k_{\lambda}(t)\,dxdt\\
&&\quad+\int^{t_1}_{0}\int_{\Omega}\int^{t}_{0}\bigg[\int^{u^{m}(t-s)}_{u^{m}(t)}T_{K}(\sigma)\,d\sigma-T_{K}(u^{m}(t))(u^{m}(t-s)-u^{m}(t))\bigg][-k'_{\lambda}(s)]\,dsdxdt\\
&&\quad\leq C,
\end{eqnarray*}
where  $C=C(K,m)$  is independent of $\lambda$. Then, the  Fatou's lemma yields that
\begin{equation}\label{eqdelete316}
    [T_{K}(u^m)u^m-\int^{u^m}_{0}T_{K}(\sigma)\,d\sigma]k(t)\in L^1(\Omega_T),
\end{equation}
and
\begin{equation}\label{eqdelete317}
   \int^{t}_{0}\bigg[\int^{u^m(t-s)}_{u^m(t)}T_{K}(\sigma)\,d\sigma-T_{K}(u^m(t))(u^m(t-s)-u^m(t))\bigg][-k'(s)]\,ds\in L^1(\Omega_T).
\end{equation}
Combining with \eqref{eqstep2j}, we know that
\begin{equation*}
    \begin{split}
       &\liminf_{m\to\infty}\int^{t_1}_{0}\int_{\Omega}\bigg[T_{K}(u^{m}(t))u^{m}-\int^{u^{m}}_{0}T_{K}(\sigma)\,d\sigma\bigg]k(t)\,dxdt\\
&\quad+\liminf_{m\to\infty}\int^{t_1}_{0}\int_{\Omega}\int^{t}_{0}\bigg[\int^{u^{m}(t-s)}_{u^{m}(t)}T_{K}(\sigma)\,d\sigma-T_{K}(u^{m}(t))(u^{m}(t-s)-u^{m}(t))\bigg][-k'(s)]\,dsdxdt \\
&\quad\leq \limsup_{m\to\infty}\int^{t_1}_{0}\int_{\Omega}f^{m}T_K(u^{m})\,dxdt+\limsup_{m\to\infty}\int^{t_1}_{0}\int_{\Omega}T_K(u^{m})k(t)u^{m}_{0}\,dxdt\\
&\quad \leq K\|f\|_{L^1(\Omega_T)}+K\|k(t)\|_{L^1(0,T)}\|u_0\|_{L^1 (\Omega)}.
\end{split}
\end{equation*}
Therefore,  using  the Fatou's lemma again we conclude that
\begin{equation}\label{eqdelete318}
    \begin{split}
     &\liminf_{l\to \infty}\liminf_{\mu\to 0}\liminf_{m\to\infty}\liminf_{\lambda\to 0} J^{\lambda,m,\mu,l}_{11}\\
      &\quad=\liminf_{l\to \infty}\liminf_{\mu\to 0}\liminf_{m\to\infty}\liminf_{\lambda\to 0} \int^{t_1}_{0}\int_{\Omega}\partial_{t}(k_{\lambda}*u^{m})T_{K}(u^{m})\,dxdt\\
      &\quad\geq\int_{\Omega}[k*\int^{u}_{0}T_K(\sigma)\,d\sigma ](t_1)\,dx+\int^{t_1}_{0}\int_{\Omega}\bigg[T_{K}(u(t))u-\int^{u}_{0}T_{K}(\sigma)\,d\sigma\bigg]k(t)\,dxdt\\
    &\qquad+\int^{t_1}_{0}\int_{\Omega}\int^{t}_{0}\bigg[\int^{u(t-s)}_{u(t)}T_{K}(\sigma)\,d\sigma\\
        &\qquad\qquad\qquad\qquad-T_{K}(u(t))(u(t-s)-u(t))\bigg][-k'(s)]\,dsdxdt.
    \end{split}
\end{equation}

\emph{Limit of $J^{\lambda,m,\mu,l}_{12}$.}
Thanks to Lemma \ref{lemconvex}, we get
\begin{equation}\label{step2I}
   \begin{split}
    J^{\lambda,m,\mu,l}_{12}&=-\int^{t_1}_{0}\int_{\Omega}\partial_{t}(k_{\lambda}*u^{m})h_{l}(u^{m})(T_K(u^{m}))_{\mu}\,dxdt\\
    &=-\int^{t_1}_{0}\int_{\Omega}\partial_{t}(k_{\lambda}*\int^{u^{m}}_{0}h_{l}(\sigma)\,d\sigma) (T_K(u^{m}))_{\mu}\,dxdt\\
    &\quad-\int^{t_1}_{0}\int_{\Omega}[h_{l}(u^{m})u^{m}-\int^{u^{m}}_{0}h_{l}(\sigma)\,d\sigma]k_{\lambda}(t)(T_K(u^{m}))_{\mu}\,dxdt\\
    &\quad-\int^{t_1}_{0}\int_{\Omega}\int^{t}_0\bigg[\int^{u^{m}(t-s)}_{u^{m}(t)}h_{l}(\sigma)d\sigma\\
    &\qquad\qquad\qquad-h_{l}(u^{m}(t))(u^{m}(t-s)-u^{m}(t))\bigg][-k'_{\lambda}(s)]\,ds (T_K(u^{m}))_{\mu}\,dxdt\\
    &:=-I^1_{\lambda,m,\mu,l}-I^2_{\lambda,m,\mu,l}-I^3_{\lambda,m,\mu,l}.
\end{split}
\end{equation}
First, we are going to prove that
\begin{equation*}
    \limsup_{l\to \infty}\limsup_{\mu\to 0}\limsup_{m\to\infty}\limsup_{\lambda\to 0}(I^2_{\lambda,m,\mu,l}+I^3_{\lambda,m,\mu,l})=0.
\end{equation*}
Define $T^{+}_{l,l+1}(r):=T_{l,l+1}(\max(r,0))$ and $T^{-}_{l,l+1}(r):=T_{l,l+1}(-\max(-r,0))$ for $l\in \mathbb{R}$. Since $h_l(\sigma)=T^{-}_{l,l+1}(\sigma)-T^{+}_{l,l+1}(\sigma)+1$, it follows
\begin{equation*}
\begin{split}
I^2_{\lambda,m,\mu,l}&=\int^{t_1}_{0}\int_{\Omega}[T^{-}_{l,l+1}(u^{m})u^{m}-\int^{u^{m}}_{0}T^{-}_{l,l+1}(\sigma)d\sigma]k_{\lambda}(t)(T_K(u^{m}))_{\mu}\,dxdt\\
      &\quad-\int^{t_1}_{0}\int_{\Omega}[T^{+}_{l,l+1}(u^{m})u^{m}-\int^{u^{m}}_{0}T^{+}_{l,l+1}(\sigma)d\sigma]k_{\lambda}(t)(T_K(u^{m}))_{\mu}\,dxdt
\end{split}
\end{equation*}
and
\begin{align*}
I^3_{\lambda,m,\mu,l}&=\int^{t_1}_{0}\int_{\Omega}\int^{t}_0\bigg[\int^{u^{m}(t-s)}_{u^{m}(t)}T^{-}_{l,l+1}(\sigma) d\sigma\\
 &\qquad\qquad\qquad -T^{-}_{l,l+1}(u^{m})(u^{m}(t-s)-u^{m}(t))\bigg][-k'_{\lambda}(s)]\,ds (T_K(u^{m}))_{\mu}\,dxdt\\
     &\quad-\int^{t_1}_{0}\int_{\Omega}\int^{t}_0\bigg[\int^{u^{m}(t-s)}_{u^{m}(t)}T^{+}_{l,l+1}(\sigma) d\sigma\\
     &\qquad\qquad\qquad -T^{+}_{l,l+1}(u^{m})(u^{m}(t-s)-u^{m}(t))\bigg][-k'_{\lambda}(s)]\,ds(T_K(u^{m}))_{\mu}\,dxdt.
\end{align*}
Choosing $T_{l,l+1}(u^{m})=T_{1}(u^{m}-T_{l}(u^{m}))$ ~($l>0$)
as a test function for problem \eqref{eq:mainappro}, we obtain
\begin{equation}\label{eq_lmn}
\begin{split}
&\frac{1}{2}\int^{t_1}_{0}\int_{\{(u(x,t),u(y,t))\in R_{l}\}}|u^{m}(x,t)-u^{m}(y,t)|\,dvdt+\int_{\Omega}[k_{\lambda}*\int^{u^{m}}_{0}T_{K}(\sigma)\,d\sigma](t_1)\,dx\\
&\quad+\int^{t_1}_{0}\int_{\Omega}[T_{l,l+1}(u^{m})u^{m}-\int^{u^{m}}_{0}T_{l,l+1}(\sigma)d\sigma]k_{\lambda}(t)\,dxdt\\
&\quad+\int^{t_1}_{0}\int_{\Omega}\int^{t}_{0}\bigg[\int^{u^{m}(t-s)}_{u^{m}(t)}T_{l,l+1}(\sigma)d\sigma\\
&\qquad\qquad\qquad\qquad-T_{l,l+1}(u^{m})(u^{m}(t-s)-u^{m}(t))\bigg][-k'_{\lambda}(s)]\,ds\,dxdt\\
&\quad\leq \int_{\Omega_{T} \cap\{|u^{m}|>l\}}f^{m}\,dxdt+\int_{\Omega_{T}\cap\{|u^{m}|>l\}}k_{\lambda}u^{m}_{0}\,dxdt\\
&\quad \quad+\int_{\Omega_{T}\cap\{|u^{m}|>l\}}T_{l,l+1}(u^{m})|\partial_{t}[(k_{\lambda}-k)*(u^{m}-u^{m}_{0})]|\,dxdt
\end{split}
\end{equation}
for every $t_1\in (0,T)$, where
\[R_{l}=\{(w,v)\in \mathbb{R}^2: l+1 \leq \max{\{|w|,|v|\}}~and ~ \min{\{|w|,|v|\}}\leq l~or~wv<0\}.\]
Since $k_{\lambda}\to k$ in $L^1(0,T)$, $u^{m}\to u$ a.e. in $\Omega_T$  and $|u|<\infty$ a.e. in $\Omega_T$ it follows that
\[ \limsup_{l\to\infty}\limsup_{m\to \infty}\limsup_{\lambda\to 0}\bigg(\int_{\Omega_{T}\cap\{|u^{m}|>l\}}f^{m}\,dxdt+\int_{\Omega_{T}\cap\{|u^{m}|>l\}}k_{\lambda}u^{m}_{0}\,dxdt\bigg)=0.\]
Moreover, since $(u_m-u_0)\in D(B_1)$, one immediately obtain that the last term in \eqref{eq_lmn} tends to 0 as $\lambda \to 0$.
Since all terms on the left-hand side in \eqref{eq_lmn} are non-negative, we
conclude that, for every $t_1\in(0,T)$,
\begin{equation}
\begin{split}
&\limsup_{l\to\infty}\limsup_{m\to \infty}\limsup_{\lambda\to 0}\int^{t_1}_{0}\int_{\Omega}[T_{l,l+1}(u^{m})u^{m}-\int^{u^{m}}_{0}T_{l,l+1}
(\sigma)d\sigma]k_{\lambda}\,dxdt=0,\\
&\limsup_{l\to\infty}\limsup_{m\to \infty}\limsup_{\lambda\to 0}\int^{t_1}_{0}\int_{\Omega}\int^{t}_{0}\bigg[\int^{u^{m}(t-s)}_{u^{m}(t)}T_{l,l+1}(\sigma)d\sigma\\
&\qquad\qquad\qquad\qquad\qquad\qquad-T_{l,l+1}(u^{m}(t))\bigg(u^{m}(t-s)-u^{m}\bigg)\bigg][-k'_{\lambda}(s)]\,dsdxdt=0.
\end{split}
\end{equation}
It is easy to see that the corresponding result holds in the case $T_{l,l+1}$ is replaced by $T^{\pm}_{l,l+1}$. Thus, we obtain
\[\limsup_{l\to \infty}\limsup_{\mu\to 0}\limsup_{m\to\infty}\limsup_{\lambda\to 0}(I^2_{\lambda,m,\mu,l}+I^3_{\lambda,m,\mu,l})=0.\]

Next, we will focus on  the term $I^1_{\lambda,m,\mu,l}$. It is easy to check that $\int^{u^{m}}_{0} h_{l}(\sigma)\,d\sigma$ is uniformly bounded.
Thus, it follows from the Lebesgue's dominated theorem that
\begin{eqnarray*}
    \int^{u^{m}}_{0} h_{l}(\sigma)\,d\sigma \xrightarrow{m\to\infty} \int^{u}_{0}h_{l}(\sigma)\,d\sigma\quad \text{in } L^2(0,T; L^2(\Omega)).
\end{eqnarray*}
Moreover, we know that
\begin{eqnarray*}
   B_{\lambda} \int^{u^{m}}_{0}h_{l}(\sigma)\,d\sigma=\partial_t(k_{\lambda}*\int^{u^{m}}_{0}h_{l}(\sigma)\,d\sigma)\in L^2(0,T;  (X_{0}^{s,2}(\Omega))').
\end{eqnarray*}
Using the fact that
\[J^B_{\mu}B_{\lambda}v=\frac{B}{(I+\mu B)(I+\lambda B)}v=B_{\mu}J^B_{\lambda}v \quad \text{for any } v\in D(B),\]
we obtain
\begin{equation}\label{eqdelete319}
\begin{split}
I^{1}_{\lambda,m,\mu,l}&=\int^{t_1}_{0}\int_{\Omega}\partial_{t}(k_{\lambda}*\int^{u^{m}}_{0}h_{l}(\sigma)\,d\sigma) (T_K(u))_{\mu}\,dxdt\\
      &=\bigg\langle B_{\lambda}\int^{u^{m}}_{0}h_{l}(\sigma)\,d\sigma,J^{B^*}_{\mu}T_{K}(u)\bigg\rangle_{L^2(0,t_1; (X_{0}^{s,2}(\Omega))') \times L^2(0,t_1; X_{0}^{s,2}(\Omega)) }\\
      &=\bigg\langle B_{\mu}J^{B}_{\lambda} \int^{u^{m}}_{0}h_{l}(\sigma)\,d\sigma, T_{K}(u)\bigg \rangle_{L^2(0,t_1; (X_{0}^{s,2}(\Omega))') \times L^2(0,t_1; X_{0}^{s,2}(\Omega))}
\end{split}
\end{equation}
for any $t_1\in (0,T)$. Since $J^{B}_{\lambda}$ is bounded  operator satisfying $J^{B}_{\lambda}v \to v$ in  $L^2(0,T;  (X_{0}^{s,2}(\Omega))')$ for all $v\in L^2(0,T;  (X_{0}^{s,2}(\Omega))')$ as $\lambda \to 0$, the continuous embedding $L^2(\Omega)\hookrightarrow (X_{0}^{s,2}(\Omega))' $ yields
\begin{eqnarray*}
    J_{\lambda}^{B} \int^{u^{m}}_{0} h_{l}(\sigma)\,d\sigma \xrightarrow{\lambda\to 0} \int^{u^{m}}_{0}h_{l}(\sigma)\,d\sigma \xrightarrow{m\to\infty} \int^{u}_{0}h_{l}(\sigma)\,d\sigma\quad \text{in}~~L^2(0,T;  (X_{0}^{s,2}(\Omega))').
\end{eqnarray*}
Thus, the continuity of the Yosida approximation $B_{\mu}$ implies
\begin{equation*}
    \begin{split}
        I^{1}_{\lambda,m,\mu,l} &\xrightarrow[m\to\infty]{\lambda\to 0} \langle B_{\mu} \int^{u}_{0}h_{l}(\sigma)\,d\sigma, T_{K}(u) \rangle\\
        &=\int^{t_1}_{0}\int_{\Omega}\partial_t(k_{\mu}*\int^{u}_{0} h_{l}(\sigma)\,d\sigma)T_K(u)\,dxdt.
    \end{split}
\end{equation*}
Define $g_{l}(u):= \int^{u}_{0}h_{l}(\sigma)\,d\sigma$, $u\in \mathbb{R}$ and
\[H(u):=\int^{u}_{0}T_{K}\circ g^{-}_{l}(\sigma)\,d\sigma,\quad u\in ran(g_{l}).\]
Using the transformation $\sigma\mapsto g_{l}(\sigma)$, we see that
\[H(g_{l}(u(t)))=\int^{u(t)}_{0}T_{K}(\sigma)\,dg_{l}(\sigma).\]
By the definition of $g_l$, the measure $dg_l$ is absolutely continuous with respect to $d\sigma$ and the Radon-Nikodym derivative is given by $h_l$. According to Lemma~\ref{lem:equality}, we get
\begin{equation}\label{eqdelete320}
    \begin{split}
        &-\int^{t_1}_{0}\int_{\Omega}\partial_t\bigg(k_{\mu}*\int^{u}_{0} h_{l}(\sigma)\,d\sigma\bigg)T_K(u)\,dxdt\\
        &=-\int_{\Omega}\bigg[k_{\mu}*\int^{u}_{0}T_K(\sigma)h_l(\sigma)\,d\sigma \bigg](t_1)\,dx\\
         &\quad-\int^{t_1}_{0}\int_{\Omega}\bigg[T_{K}(u(t))\int^{u(t)}_{0}h_{l}(\sigma)\,d\sigma-\int^{u(t)}_{0}T_{K}(\sigma)h_l(\sigma)\,d\sigma\bigg]k_{\mu}(t)\,dxdt\\
        &\quad-\int^{t_1}_{0}\int_{\Omega}\int^{t}_{0}\bigg[\int^{u(t-s)}_{u(t)}T_{K}(\sigma)h_l(\sigma)\,d\sigma-T_{K}(u(t))\int^{u(t-s)}_{u^(t)}h_{l}(\sigma)\,d\sigma\bigg][-k'_{\mu}(s)]\,dsdxdt.
    \end{split}
\end{equation}

According to the  Young's inequality and  $k_{\mu}\to k$ in $L^1(0,T)$, we have
\[\int_{\Omega}\bigg[k_{\mu}*\int^{u}_{0}T_K(\sigma)h_l(\sigma)\,d\sigma \bigg]dx \xrightarrow{\mu\to 0} \int_{\Omega}\bigg[k*\int^{u}_{0}T_K(\sigma)h_l(\sigma)\,d\sigma \bigg]dx\]
in $L^1(0,T)$ and a.e. in $(0,T)$ for  subsequence. In addition, we note that
\[\int^{u}_{0}T_K(\sigma)h_l(\sigma)\,d\sigma\leq \int^{u}_{0}T_K(\sigma)\,d\sigma\quad\text{a.e. in }\Omega_{T},\]
then the Lebesgue's dominated theorem yields  that
\[\int_{\Omega}\bigg[k*\int^{u}_{0}T_K(\sigma)h_l(\sigma)\,d\sigma \bigg]\,dx\xrightarrow{l\to\infty} \int_{\Omega}\bigg[k*\int^{u}_{0}T_K(\sigma)\,d\sigma \bigg]\,dx\quad \text{in } L^1(0,T)\]
 and  a.e. in $(0,T)$ for  subsequence.

Note that $0\leq h_{l}\leq 1$ also implies
\begin{equation*}
    \begin{split}
        0&\leq T_{K}(u(t))\int^{u}_{0}h_{l}(\sigma)\,d\sigma-\int^{u}_{0}T_{K}(\sigma)h_l(\sigma)\,d\sigma\\
        &\leq T_{K}(u(t))u(t)-\int^{u}_{0}T_{K}(\sigma)\,d\sigma
    \end{split}
\end{equation*}
a.e. on $\Omega_T$, and
\begin{equation*}
    \begin{split}
       0&\leq \int^{u(t-s)}_{u(t)}T_{K}(\sigma)h_l(\sigma)\,d\sigma-T_{K}(u(t))\int^{u(t-s)}_{u(t)}h_{l}(\sigma)\,d\sigma\\
       &\leq \int^{u(t-s)}_{u(t)}T_{K}(\sigma)\,d\sigma-T_{K}(u(t))(u(t-s)-u(t))
\end{split}
\end{equation*}
a.e. on $\Omega_{T}$.
Due to  $k_{\mu}$ is non-negative, non-increasing and satisfies (K1) and (K2), we deduce from the Lebesgue's dominated theorem that
\begin{equation*}
    \begin{split}
        &\liminf_{l\to\infty}\liminf_{\mu\to 0}\int^{t_1}_{0}\int_{\Omega}\bigg[T_{K}(u(t))\int^{u}_{0}h_{l}(\sigma)\,d\sigma-\int^{u}_{0}T_{K}(\sigma)h_l(\sigma)\,d\sigma\bigg]k_{\mu}(t)\,dxdt\\
         &\quad=\int^{t_1}_{0}\int_{\Omega}\bigg[T_{K}(u(t))u(t)-\int^{u}_{0}T_{K}(\sigma)\,d\sigma\bigg]k(t)\,dxdt
    \end{split}
\end{equation*}
and
\begin{equation*}
    \begin{split}
&\liminf_{l\to\infty}\liminf_{\mu\to 0} \int^{t_1}_{0}\int_{\Omega}\int^{t}_{0}\bigg[\int^{u(t-s)}_{u(t)}T_{K}(\sigma)h_l(\sigma)\,d\sigma-T_{K}(u(t))\int^{u(t-s)}_{u^(t)}h_{l}(\sigma)\,d\sigma\bigg][-k'_{\mu}(s)]\,dsdxdt\\
&\quad= \int^{t_1}_{0}\int_{\Omega}\int^{t}_{0}\bigg[\int^{u(t-s)}_{u(t)}T_{K}(\sigma)\,d\sigma-T_{K}(u(t))(u(t-s)-u(t))\bigg][-k'(s)]\,dsdxdt.
    \end{split}
\end{equation*}
Thus, passing to the limit in \eqref{eqdelete319} yields
\begin{equation*}\label{eqdelete321}
    \begin{split}
     &\liminf_{l\to \infty}\liminf_{\mu\to 0}\liminf_{m\to\infty}\liminf_{\lambda\to 0} J_{12}^{\lambda,m,\mu,l} \\
     &\quad=-\liminf_{l\to \infty}\liminf_{\mu\to 0}\liminf_{m\to\infty}\liminf_{\lambda\to 0} I^{1}_{\lambda,m,\mu,l}\\
        &\quad=-\int_{\Omega}[k*\int^{u}_{0}T_K(\sigma)\,d\sigma ](t_1)\,dx\\
         &\quad\quad-\int^{t_1}_{0}\int_{\Omega}\bigg[T_{K}(u(t))u-\int^{u}_{0}T_{K}(\sigma)\,d\sigma\bigg]k(t)\,dxdt\\
        &\quad\quad-\int^{t_1}_{0}\int_{\Omega}\int^{t}_{0}\bigg[\int^{u(t-s)}_{u(t)}T_{K}(\sigma)\,d\sigma-T_{K}(u(t))(u(t-s)-u(t))\bigg][-k'(s)]\,dsdxdt.
    \end{split}
\end{equation*}
Combining with \eqref{eqJI} and \eqref{eqdelete318}, we obtain
\begin{equation*}
    \begin{split}
    &\liminf_{l\to \infty}\liminf_{\mu\to 0}\liminf_{m\to\infty}\liminf_{\lambda\to 0}\int^{t_1}_{0}\int_{\Omega}J_{1}^{\lambda,m,n,\mu,l}\\
    &\quad=\liminf_{l\to \infty}\liminf_{\mu\to 0}\liminf_{m\to\infty}\liminf_{\lambda\to 0}\int^{t_1}_{0}\int_{\Omega}J_{11}^{\lambda,m,n,\mu,l}+J_{12}^{\lambda,m,n,\mu,l}\geq 0,
    \end{split}
\end{equation*}
which end the proof of \eqref{eq:budengshi}.

\textbf{Limit of $J^{\lambda,m,\mu,l}_{2}$ }
Therefore, we conclude that for $t_1\in (0,T)$,
\begin{equation}
    \begin{split}
        &\limsup_{l\to \infty}\limsup_{\mu\to 0}\limsup_{m\to\infty}\limsup_{\lambda\to 0}  J^{\lambda,m,\mu,l}_{2}\\
        &\quad =\limsup_{l\to \infty}\limsup_{\mu\to 0}\limsup_{m\to\infty}\int^{t_1}_{0}\langle (-\Delta)^{s}_{2}u^m, T_{K}(u^{m})-h_l(u^{m})(T_K(u))_{\mu}\rangle dt\\
        &\quad\leq 0.
        \end{split}
\end{equation}
Then, it is easy to deduce that
\begin{equation*}
\begin{split}
&\limsup_{m\to\infty}\int^{T}_{0}\langle (-\Delta)^{s}_{2}u^m, T_{K}(u^{m})-T_K(u)\rangle dt\\
        &\quad\leq\limsup_{l\to \infty}\limsup_{\mu\to 0}\limsup_{m\to\infty}\int^{t_1}_{0}\langle (-\Delta)^{s}_{2}u^m, h_{l}(u^m)(T_{K}(u))_{\mu}-T_K(u)\rangle dt.
\end{split}
\end{equation*}
 Denote that
\begin{equation*}
    \begin{split}
     & H^{m,\mu,l}:=(u^m(x)-u^m(y))\bigg(h_{l}(u^m(x))(T_{K}(u(x)))_{\mu}-T_{K}(u(x))\\
         &\qquad\qquad\qquad\qquad\qquad\qquad\qquad-(h_{l}(u^m(y))(T_{K}(u(y)))_{\mu}-T_{K}(u(y)))\bigg).
    \end{split}
\end{equation*}
We have
  \begin{equation*}
     \begin{split}
         &\int^{T}_{0}\langle (-\Delta)^{s}_{2}u^m, h_{l}(u^m)(T_{K}(u))_{\mu}-T_K(u)\rangle dt=\int^{T}_{0}\int_{\mathcal{D}_{\Omega}}H^{m,\mu,l}\,dvdt.
     \end{split}
 \end{equation*}
Taking $l\geq K$, we set
\begin{equation*}
    \begin{split}
        & D_1=\{(x,y,t)\in \mathcal{D}\times (0,T): u^m(x,t)\leq l,\qquad u^m(y,t)\leq l\},\\
        & D_2=\{(x,y,t)\in \mathcal{D}\times (0,T): u^m(x,t)\leq l, \qquad l+1\leq u^m(y,t)\},\\
        & D_3=\{(x,y,t)\in \mathcal{D}\times (0,T): u^m(x,t)\leq l, \qquad l\leq u^m(y,t)\leq l+1\},\\
           & D_4=\{(x,y,t)\in \mathcal{D}\times (0,T): l\leq u^m(x,t)\leq l+1, \qquad u^m(y,t)\leq l\},\\
             & D_5=\{(x,y,t)\in \mathcal{D}\times (0,T): l \leq u^m(x,t)\leq l+1, \qquad
 l+1\leq u^m(y,t)\},\\
  & D_6=\{(x,y,t)\in \mathcal{D}\times (0,T): l\leq u^m(x,t)\leq l+1,\qquad l \leq u^m(y,t)\leq l+1\},\\
              & D_7=\{(x,y,t)\in \mathcal{D}\times (0,T): l+1\leq u^m(x,t),\qquad u^m(y,t)\leq l\},\\
              & D_8=\{(x,y,t)\in \mathcal{D}\times (0,T):l+1\leq u^m(x,t),\qquad l+1\leq u^m(y,t)\},\\
     & D_9=\{(x,y,t)\in \mathcal{D}\times (0,T): l+1 \leq u^m(x,t),\qquad l\leq u^m(y,t)\leq l+1\}.
    \end{split}
\end{equation*}
Then
\[\mathcal{D}\times (0,T)=D_1\cup D_2\cup D_3\cup D_4\cup D_5\cup D_6\cup D_7\cup D_8\cup D_9.\]

In $D_1$, we have $h_{l}(u^m(x))=h_{l}(u^m(y))=1$. Therefore, we obtain
\begin{equation*}
    \begin{split}
&\limsup_{\mu\to 0}\limsup_{m\to\infty} \int_{D_1} H^{m,\mu,l}\,dvdt\\
&\quad =\limsup_{\mu\to 0}\limsup_{m\to\infty}\int_{D_1}(u^m(x)-u^m(y))\bigg((T_{K}(u(x)))_{\mu}-T_{K}(u(x))\\
&\qquad\qquad\qquad\qquad-((T_{K}(u(y)))_{\mu}-T_{K}(u(y)))\bigg)\,dvdt\\
&\quad \leq \limsup_{\mu\to 0}\limsup_{m\to\infty}\int_{D}|T_{l}(u^m(x))-T_{l}(u^{m}(y))|\bigg|(T_{K}(u(x)))_{\mu}-T_{K}(u(x))\\
&\qquad\qquad\qquad\qquad-((T_{K}(u(y)))_{\mu}-T_{K}(u(y)))\bigg|\,dvdt.
    \end{split}
\end{equation*}
Since $(T_{K}(u))_{\mu}$ strongly convergence to $T_{K}(u)$  in $L^2(0,T; X_{0}^{s,2}(\Omega))$ and a.e. in $\Omega_T$ as $\mu \to 0$, it follows that
\begin{equation*}
\limsup_{\mu\to 0}\limsup_{m\to\infty} \int_{D_1} H^{m,\mu,l}\,dvdt  \leq 0.
\end{equation*}

In $D_2$, we have
\begin{equation*}
    \begin{split}
\int_{D_2} H^{m,\mu,l}\,dvdt=\int_{D_2}(u^m(x)-u^m(y))\bigg((T_{K}(u(x)))_{\mu}-T_{K}(u(x))+T_{K}(u(y))\bigg)\,dvdt.
    \end{split}
\end{equation*}
Combining with the fact that $K\leq l+1\leq u^m(y)\leq u(y)$, we know that
\[T_{K}(u(y))-T_{K}(u(x)) \geq 0.\]
Thus, we conclude that
\begin{equation*}
\limsup_{\mu\to 0}\limsup_{m\to\infty} \int_{D_2} H^{m,\mu,l}\,dvdt  \leq 0.
\end{equation*}
We point out out that the estimates in $D_7$ are similarly to the case in $D_2$.

In $D_3$, we have
\begin{equation*}
    \begin{split}
&\limsup_{\mu\to 0}\limsup_{m\to\infty} \int_{D_3} H^{m,\mu,l}\,dvdt\\
&\quad =\limsup_{\mu\to 0}\limsup_{m\to\infty}\int_{D_3}(u^m(x)-u^m(y))\bigg((T_{K}(u(x)))_{\mu}-T_{K}(u(x))\\
&\qquad\qquad\qquad\qquad-(h_{l}(u^m(y))(T_{K}(u(y)))_{\mu}-T_{K}(u(y)))\bigg)\,dvdt\\
&\quad \leq \limsup_{\mu\to 0}\limsup_{m\to\infty}\int_{D}|T_{l+1}(u^m(x))-T_{l+1}(u^{m}(y))|\bigg|(T_{K}(u(x)))_{\mu}-T_{K}(u(x))\\
&\qquad\qquad\qquad\qquad-((T_{K}(u(y)))_{\mu}-T_{K}(u(y)))\bigg|\,dvdt.
    \end{split}
\end{equation*}
Similarly to the estimates in $D_1$, we have
\begin{equation*}
\limsup_{\mu\to 0}\limsup_{m\to\infty} \int_{D_3} H^{m,\mu,l}\,dvdt  \leq 0.
\end{equation*}

In $D_4$,  it can be done similarly to the estimate in $D_3$.

In $D_5$,  we have
\begin{equation*}
    \begin{split}
\int_{D_5} H^{m,\mu,l}\,dvdt=\int_{D_5}(u^m(x)-u^m(y))\bigg(h_{l}(u^m(x))(T_{K}(u(x)))_{\mu}-T_{K}(u(x))+T_{K}(u(y))\bigg)\,dvdt.
    \end{split}
\end{equation*}
Similarly to estimate in $D_2$, we have
\begin{equation*}
\limsup_{\mu\to 0}\limsup_{m\to\infty} \int_{D_5} H^{m,\mu,l}\,dvdt  \leq 0.
\end{equation*}
It is easy to check that the estimates in $D_9$ are similarly to the case in $D_5$.

In $D_6$, since $K\leq l \leq u^m(x)\leq u(x)$, $K\leq l \leq u^m(y)\leq u(y)$, we have
\begin{equation*}
    \begin{split}
\int_{D_6} H^{m,\mu,l}\,dvdt &= \int_{D_6} (u^m(x)-u^m(y))\bigg(h_{l}(u^m(x))-h_{l}(u^m(y))\bigg)K_{\mu}\,dvdt,\\
&\leq K\int_{D_6} (u^m(x)-u^m(y))^2 h'_{l}(\xi)\,dvdt\\
&\leq 0,
    \end{split}
\end{equation*}
where $\xi$  is between $u^m(x)$ and $u^m(y)$.

In $D_8$, since $h_{l}(u^m(x))=h_{l}(u^m(y))=0$, $T_{K}(u(x))=T_{K}(u(y))=K$, we immediately obtain
\begin{equation*}
   \int_{D_9}H^{m,\mu,l}\,dvdt=0.
\end{equation*}

Combining all the above estimates, we get
\begin{equation*}
\begin{split}
&\limsup_{m\to\infty}\int^{T}_{0}\langle (-\Delta)^{s}u^m, T_{K}(u^{m})-T_K(u)\rangle dt\\
        &\quad\leq\limsup_{l\to \infty}\limsup_{\mu\to 0}\limsup_{m\to\infty}\int^{T}_{0}\langle (-\Delta)^{s}u^m, h_{l}(u^m)(T_{K}(u))_{\mu}-T_K(u)\rangle dt\\
        &\quad\leq 0,
\end{split}
\end{equation*}
which implies that
\begin{equation}
    \begin{split}
       &\limsup_{m\to\infty}\int^{T}_{0}\int_{\mathcal{D}_{\Omega}}(u^{m}(x)-u^{m}(y))(T_{K}(u^{m}(x))-T_{K}(u^{m}(y)))\,dvdt\\
        &\quad\leq \limsup_{m\to\infty} \int^{T}_{0}\int_{\mathcal{D}_{\Omega}}(u^{m}(x)-u^{m}(y))(T_{K}(u(x))-T_{K}(u (y)))\,dvdt.
    \end{split}
\end{equation}
Then following the arguments of the proof of \cite [Lemma~3.6]{abdellaoui2019on}, we have
\begin{equation}
    \begin{split}
&\limsup_{m\to\infty}\int^{T}_{0}\int_{\mathcal{D}_{\Omega}}\bigg|T_{K}(u^{m}(x))-T_{K}(u^{m}(y))\bigg|^2\,dvdt\\
            &\quad\leq\int^{T}_{0}\int_{\mathcal{D}_{\Omega}}\bigg|T_{K}(u(x))-T_{K}(u (y))\bigg|^2\,dvdt.
    \end{split}
\end{equation}
Since $T_{K}(u^m)\overset{m\to\infty}\rightharpoonup T_K(u)$ weakly in $L^2(0,T;X^{s,2}_{0}(\Omega))$, we obtain $T_{K}(u^m)\xrightarrow{m\to\infty} T_{K}(u)$ strongly in  $L^2(0,T;X^{s,2}_{0}(\Omega))$.

\textbf{Step 3.} Show that $u$ is an entropy solution.

Let $S\in\mathcal{P}$, $\phi\in X^{s,2}_{0}(\Omega)\cap L^{\infty}(\Omega)$, $\zeta\in C^{\infty}_{0}([0,T))$, $\zeta\geq 0$. Taking $S(u^m-\phi)\zeta$ as a test function in \eqref{eqweaksolution}, we obtain
\begin{equation*}
\begin{split}
   &\int^{T}_{0}\int_{\Omega}\zeta S(u^m-\phi)\partial_{t}[k*(u^m-u^m_0)]\,dxdt\\
   &\quad+\frac{1}{2}\int^{T}_{0}\int_{\mathcal{D}_{\Omega}}\zeta(u^m(x)-u^m(y))(S(u^m-\phi)(x)-S(u^m-\phi)(y))\,dvdt\\
   &\quad=\int_{0}^{T}\int_{\Omega} f^m S(u^m-\phi)\zeta\,dxdt.
\end{split}
\end{equation*}
Choosing arbitrary $k_1,k_2\in L^1(0,T)$ non-increasing and non-negative with $k_2(0^+)<\infty$ such that $k=k_1+k_2$.   For $\lambda>0$ we define $k_{1,\lambda}$ by the kernel associated to the Yosida-approximation of the operator $B^1:=\partial_{t}(k_1*\cdot)$, $D(B^1):=\{\omega\in L^2(0,T;(X^{s,2}_{0}(\Omega))');k_1*\omega \in {}_{0}W^{1,2}(0,T; (X^{s,2}_{0}(\Omega))')\}$. Then there holds
\begin{equation*}
\begin{split}
   &\int^{T}_{0}\int_{\Omega}\zeta S(u^m-\phi)\partial_{t}[k_{1,\lambda}*(u^m-u^m_0)]\,dxdt+\int^{T}_{0}\int_{\Omega}\zeta S(u^m-\phi)\partial_{t}[k_2*(u^m-u^m_0)]\,dxdt\\
   &\quad+\frac{1}{2}\int^{T}_{0}\int_{\mathcal{D}_{\Omega}}\zeta(u^m(x)-u^m(y))(S(u^m-\phi)(x,t)-S(u^m-\phi)(y,t))\,dvdt\\
   &\quad=\int_{0}^{T}\int_{\Omega} f^m S(u^m-\phi)\zeta\,dxdt+\int^{T}_{0}\int_{\Omega}\zeta S(u^m-\phi)\partial_{t}[(k_{1,\lambda}-k_1)*(u^m-u^m_0)]\,dxdt.
\end{split}
\end{equation*}
According to Lemma~\ref{lemH}, we obtain
\begin{equation*}
\begin{split}
   &-\int^{T}_{0}\int_{\Omega}\zeta_t (k_{1,\lambda}*\int^{u^m}_{u^m_0}S(\sigma-\phi)\,d\sigma dxdt
   +\int^{T}_{0}\int_{\Omega}\zeta S(u^m-\phi)\partial_{t}[k_2*(u^m-u^m_0)]\,dxdt\\
   &\quad+\frac{1}{2}\int^{T}_{0}\int_{\mathcal{D}_{\Omega}}\zeta(u^m(x)-u^m(y))(S(u^m-\phi)(x)-S(u^m-\phi)(y))\,dvdt\\
   &\quad=\int_{0}^{T}\int_{\Omega} f^m S(u^m-\phi)\zeta\,dxdt+\int^{T}_{0}\int_{\Omega}\zeta S(u^m-\phi)\partial_{t}[(k_{1,\lambda}-k_1)*(u^m-u^m_0)]\,dxdt.\end{split}
\end{equation*}
Recalling the fact that $k_{1,\lambda}\to k_{1}$ in $L^1(0,T)$ and $u^m-u^{m}_{0}\in D(B^1)$, we obtain $\partial_{t}(k_{1,\lambda}*(u^m-u^m_{0}))\to \partial_{t}(k_{1}*(u^m-u^m_{0})) $ in $L^2(0,T;(X^{s,2}_{0}(\Omega))')$. Passing to the limit as $\lambda\to 0$ in the above equation, we have
\begin{equation}\label{eq:entropy}
    \begin{split}
&-\int^{T}_{0}\int_{\Omega}\zeta_t\bigg(k_1*\int^{u^m}_{u^m_0}S(\sigma-\phi)\,d\sigma\bigg)\,dxdt+\int^{T}_{0}\int_{\Omega}\zeta\partial_t[k_2*(u^m-u^m_0)]S(u^m-\phi)\,dxdt\\
        &\quad +\frac{1}{2}\int^{T}_{0}\int_{\mathcal{D}_{\Omega}} (u^m(x)-u^m(y))\bigg(S(u^m(x,t)-\phi(x,t))-S(u^m(y,t)-\phi(y,t))\bigg)\zeta\,dvdt \\
        &\leq \int^{T}_{0}\int_{\Omega}\zeta f^mS(u^m-\phi)\,dxdt.
    \end{split}
\end{equation}

 We  first consider the term in the right-hand side of \eqref{eq:entropy}. Since $S$ is bounded and continuous, we immediately see that
\begin{equation*}
\lim_{m\to\infty}\int^{T}_{0}\int_{\Omega}\zeta f^mS(u^m-\phi)\,dxdt= \int^{T}_{0}\int_{\Omega}\zeta fS(u-\phi)\,dxdt.
\end{equation*}
 We  now deal with the first term  of \eqref{eq:entropy}. Since $u^m\xrightarrow[]{m\to\infty} u$ a.e. in $\Omega_T$, it follows that
  \[\int^{u^m}_{u^m_0}S(\sigma-\phi)\,d\sigma \to \int^u_{u_0}S(\sigma-\phi)\,d\sigma\quad \text{a.e in } \Omega_T.\]
Since $|u^m|\leq |u|$, it follows that
\begin{equation*}
    \bigg|\int^{u^m}_{u^m_0}S(\sigma-\phi)\,d\sigma\bigg| \leq C|u|\in L^1(\Omega_T).
\end{equation*}
Hence, we deduced from the Lebesgue's dominated theorem that
\begin{equation*}
\lim_{m\to\infty}\int^{T}_{0}\int_{\Omega}\zeta_t\bigg(k_1*\int^{u^m}_{u^m_0}S(\sigma-\phi)\,d\sigma\bigg)\,dxdt=\int^{T}_{0}\int_{\Omega}\zeta_t\bigg(k_1*\int^{u}_{u_0}S(\sigma-\phi)\,d\sigma\bigg)\,dxdt.
\end{equation*}
We now deal with the second term of \eqref{eq:entropy}. Note that
    \begin{equation*}
         \begin{split}
             &\partial_{t}[k_2*(u^m-u^m_{0})](t)\\
             &\quad=k_2(0^+)(u^m(t)-u^m_0)+\int^t_{0}u^m(t-s)-u^m_{0}\,dk_2(s) \\
             &\overset{m\to\infty}{\to}k_2(0^+)(u(t)-u_0)+\int^t_{0}u(t-s)-u_{0}\,dk_2(s)
         \end{split}
    \end{equation*}
 in $L^1(\Omega_T)$. Consequently, we have
 \begin{equation*}
     \lim_{m\to\infty}\int^{T}_{0}\int_{\Omega}\zeta\partial_t[k_2*(u^m-u^m_0)]S(u^m-\phi)\,dxdt=\int^{T}_{0}\int_{\Omega}\zeta\partial_t[k_2*(u-u_0)]S(u-\phi)\,dxdt.
 \end{equation*}
Next, we focus on the third term in the left-hand side of equation~\eqref{eq:entropy}. We set
\[w^m=u^m-\phi,\quad U^m(x,y,t)=u^m(x,t)-u^m(y,t) \quad \text{and}\quad W^m(x,y,t)=w^m(x,t)-w^m(y,t),\]
then
\begin{equation*}
    \begin{split}
       &U^m(x,y,t)[S(w^m(x,t))-S(w^m(y,t))]\zeta\\
       &\quad=:M^{1,m}(x,y,t)\zeta+M^{2,m}(x,y,t)\zeta,
    \end{split}
\end{equation*}
where
\[M^{1,m}(x,y,t)=W^m(x,y,t)[S(w^m(x,t))-S(w^m(y,t))],\]
and
\[M^{2,m}(x,y,t)=[U^m(x,y,t)-W^m(x,y,t)][S(w^m(x,t))-S(w^m(y,t))].\]
Since $\zeta \in \mathcal{D}([0,T))$, $\zeta\geq 0$,
\begin{equation*}
    \begin{split}
        M^{1,m}(x,y,t)&=W^m(x,y,t)[S(w^m(x,t))-S(w^m(y,t))]\\
        &=S'(\omega_m(\cdot))[w^m(x,t)-w^m(y,t)]^2\geq 0,
    \end{split}
\end{equation*}
and
\begin{equation*}
    M^{1,m}(x,y,t)\xrightarrow[]{m\to\infty} W(x,y,t)[S(w(x,t))-S(w(y,t))]\quad \text{a.e. in } \Omega_T,
\end{equation*}
where $w=u-\phi$ and $W(x,y,t)=w(x,t)-w(y,t)$, it follows from the Fatou's Lemma that
\begin{equation*}
    \begin{split}
\lim_{m\to\infty}\int^{T}_{0}\int_{\mathcal{D}_{\Omega}}  M^{1,m}(x,y,t)\zeta\,dvdt\geq \int^{T}_{0}\int_{\mathcal{D}_{\Omega}}W(x,y,t)[S(w(x,t))-S(w(y,t))]\zeta\,dvdt.
    \end{split}
\end{equation*}
In addition, it is easy to check that
\begin{equation*}
    \begin{split}
|M^{2,m}(x,y,t)| \leq |\phi(x,t)-\phi(y,t)||S(w^m(x,t))-S(w^m(y,t))|.
    \end{split}
\end{equation*}
Since $S\in \mathcal{P}$, we claim that there exists a constant $K$ large enough such that $S(v)=S(T_{K}(v))$ for any $v\in \R$.
Recalling the fact that $T_{K}(u^m)\xrightarrow{m\to \infty}T_{K}(u)$ strongly  in $L^2(0,T;X^{s,2}_{0}(\Omega))$, we get
\[S(w^m)\xrightarrow{m\to\infty} S(w) \quad \text{ strongly in } L^2(0,T;X^{s,2}_{0}(\Omega)),\]
which implies that
\begin{equation}
    \frac{S(w^m(x,t))-S(w^m(y,t))}{|x-y|^{\frac{N+2s}{2}}}\to  \frac{S(w(x,t))-S(w(y,t))}{|x-y|^{\frac{N+2s}{2}}}
\end{equation}
   strongly in $L^2(0,T;\mathcal{D}_{\Omega})$. Since $\phi\in X^{s,2}_{0}(\Omega)\cap L^\infty(\Omega)$, we deduce by duality argument  that
\begin{equation*}
\begin{split}
  &\int^{T}_{0}\int_{\mathcal{D}_{\Omega}}\zeta(\phi(x,t)-\phi(y,t))[S(w^m(x,t))-S(w^m(y,t))]\,dvdt \\
  &\quad\xrightarrow{m\to \infty}\int^{T}_{0}\int_{\mathcal{D}_{\Omega}}\zeta(\phi(x,t)-\phi(y,t))[S((w(x,t))-S(w(y,t))]\,dvdt.
\end{split}
\end{equation*}
Using the Lebesgue's dominated theorem, we get
\begin{equation*}
    \begin{split}
        &\lim_{m\to\infty}\int^{T}_{0}\int_{\mathcal{D}_{\Omega}} \zeta M^{2,m}(x,y,t)\,dvdt \\
        &\quad= \int^{T}_{0}\int_{\mathcal{D}_{\Omega}}\zeta[U(x,y,t)-W(x,y,t)][S(w(x,t))-S(w(y,t))]\,dvdt.
    \end{split}
\end{equation*}
Therefore, we conclude that
\begin{equation*}
\begin{split}
    &\lim_{m\to\infty}\int^{T}_{0}\int_{\mathcal{D}_{\Omega}}\zeta U^m(x,y,t)[S(w^m(x,t))-S(w^m(y,t))]\,dvdt \\
    &\quad \geq \int^{T}_{0}\int_{\mathcal{D}_{\Omega}}\zeta U(x,y,t)[S(w(x,t))-S(w(y,t))]\,dvdt.
\end{split}
\end{equation*}
Combining all the above estimates we conclude that
    \begin{align*}
        &-\int^{T}_{0}\int_{\Omega}\zeta_t\bigg[k_1*\int^{u}_{u_0}S(\sigma-\phi)\,d\sigma\bigg]+\int^{T}_{0}\int_{\Omega}\zeta\partial_t[k_2*(u-u_0)]S(u-\phi)\,dxdt\\
        &\quad +\frac{1}{2}\int^{T}_{0}\int_{\mathcal{D}_{\Omega}}(u(x)-u(y))\bigg(S(u(x,t)-\phi(x,t))-S(u(y,t)-\phi(y,t))\bigg)\zeta\,dvdt\nonumber\\
        &\quad\leq \int^{T}_{0}\int_{\Omega}\zeta fS(u-\phi)\,dxdt\nonumber
    \end{align*}
  for all $\phi\in X^{s,2}_{0}(\Omega)\cap L^{\infty}(\Omega)$, $\zeta\in C^{\infty}_{0}([0,T))$, $\zeta\geq 0$, $S\in \mathcal{P}$, and $k_1,k_2 \in L^1(0,T)$ non-increasing and non-negative with $k=k_1+k_2$ and $k_2(0^+)<\infty$. This completes the proof of Theorem~\ref{theorem:main}. \hfill$\Box$

\medskip

\noindent \textit{Proof of Proposition \ref{prop}}. Suppose that $u^{im}(i=1,2)$ are weak solutions to problem \eqref{eq:mainappro} with $u_0=u^{im}_{0}$, $f=f^{im}$ respectively. We take the test function as $H'_{\epsilon}((u^{1m}-u^{2m})^+)$ in the weak formulation of the approximate problem for both $u^{1m}$ and $u^{2m}$. We point out that the function $H_{\epsilon}$ is defined as \eqref{eqH}. Arguing as the proof of Lemma~\ref{lemmabijiao}, we obtain
 \begin{eqnarray*}
 \int^{T}_{0}\int_{\Omega}(u^{1m}-u^{2m})^{+}\,dxdt \leq T\int_{\Omega}(u_{0}^{1m}-u_{0}^{2m})^{+}\,dx+\|l\|_{L^1(0,T)}\int^{T}_{0}\int_{\Omega}(f^{1m}-f^{2m})^{+}\,dx.
    \end{eqnarray*}
Passing to  the limit with $m\to\infty$, it is easy to deduce that
\begin{eqnarray*}
 \int^{T}_{0}\int_{\Omega}(u^{1}-u^{2})^{+}\,dxdt \leq T\int_{\Omega}(u_{0}^{1}-u_{0}^{2})^{+}\,dx+\|l\|_{L^1(0,T)}\int^{T}_{0}\int_{\Omega}(f^{1}-f^{2})^{+}\,dx.
    \end{eqnarray*}
So, we obtain the desired result. The proof of \eqref{eqprop2} is analogous to  \eqref{eqprop1} with the only difference that in the case of \eqref{eqprop2}, we take the test function as $H'_{\epsilon}(u^{1m}-u^{2m})$.
\hfill$\Box$

\end{document}